\title{Convergence of implicit schemes for Hamilton-Jacobi-Bellman quasi-variational inequalities}
\author{%
Parsiad Azimzadeh%
\thanks{Department of Mathematics, University of Michigan
({\tt parsiad@umich.edu}, {\tt erhan@umich.edu}).}
\and
Erhan Bayraktar\footnotemark[1] %
\thanks{E. Bayraktar is supported in part by the National Science Foundation under grant DMS-1613170.}
\and
George Labahn%
\thanks{David R. Cheriton School of Computer Science, University of Waterloo ({\tt glabahn@uwaterloo.ca}).}%
}
\newcommand{\loc}{\operatorname{loc}}
\newcommand{\const}{\operatorname{const.}}
\newcommand{\trace}{\operatorname{trace}}
\newcommand{\interp}{\operatorname{interp}}
\newcommand{\mathand}{\qquad\text{and}\qquad}
\newcommand{\mathas}{\qquad\text{as}\qquad}
\newcommand{\mathwhere}{\qquad\text{where }}
\newcommand{\mathfor}{\qquad\text{for }}
	\newcommand{\qedhere}{}
	\def\myarticle{1}
	\date{}
	\renewcommand*{\leq}{\leqslant}
	\renewcommand*{\geq}{\geqslant}
	\titleformat{\section}{\normalfont\fontsize{16}{19}\selectfont}{\thesection}{1em}{}
	\titleformat{\subsection}{\normalfont\fontsize{14}{17}\selectfont}{\thesubsection}{1em}{}
	\titleformat{\subsubsection}{\normalfont\fontsize{14}{17}\selectfont}{\thesubsubsection}{1em}{}
	\newcounter{dummy}
	\newtheorem{definition}[dummy]{Definition}
	\newtheorem{lemma}[dummy]{Lemma}
	\newtheorem{proposition}[dummy]{Proposition}
	\newtheorem{rem}[dummy]{Remark}
	\newtheorem{theorem}[dummy]{Theorem}
	\newenvironment{keywords}{{\bf Key words.}}{\medskip{}}
	\newenvironment{AMS}{{\bf AMS subject classifications.}}{}
	\newcommand{\cref}{\prettyref}
	\newcommand{\Cref}{\prettyref}
	\newcommand*{\email}[1]{\href{mailto:#1}{\nolinkurl{#1}} }
	\definecolor{mylinkcolor}{HTML}{0066cc}
	\definecolor{mycitecolor}{HTML}{008800}
	\definecolor{myurlcolor}{HTML}{0066cc}
	\let\OLDthebibliography\thebibliography
	\renewcommand\thebibliography[1]{
	  \OLDthebibliography{#1}
	  \setlength{\parskip}{0pt}
	  \setlength{\itemsep}{4pt}
	}
\begin{document}

\maketitle

\begin{abstract}
	In [Azimzadeh, P., and P. A. Forsyth. ``Weakly chained matrices, policy iteration, and impulse control.'' \emph{SIAM J. Num. Anal.} 54.3 (2016): 1341-1364], we outlined the theory and implementation of computational methods for implicit schemes for Hamilton-Jacobi-Bellman quasi-variational inequalities (HJBQVIs).
	No convergence proofs were given therein.
	This work closes the gap by giving rigorous proofs of convergence.
	%A point of difficulty in the analysis is that a standard application of the Barles-Souganidis framework (BSF) requires a stronger comparison principle than that which is available in the literature.
	%By introducing a stronger notion of consistency than that which is posed in the BSF, we are able to prove convergence relying only on a well-known comparison principle.
	%Previous works concerning the convergence of approximation schemes for HJBQVIs are carried out under the assumption of a stronger comparison principle than that which is available in the literature.
	We do so by introducing the notion of \emph{nonlocal consistency} and appealing to a Barles-Souganidis type analysis.
	Our results rely only on a well-known comparison principle and are independent of the specific form of the intervention operator.
\end{abstract}

\begin{keywords}
	implicit numerical scheme, Hamilton-Jacobi-Bellman quasi-variational inequality (HJBQVI), viscosity solution, impulse control
\end{keywords}

\begin{AMS}
	49L25, 49N25, 65M06, 65M12, 93E20
\end{AMS}
\section{Introduction}

We consider numerical methods for approximating the viscosity solution of the Hamilton-Jacobi-Bellman quasi-variational inequality (HJBQVI)
%We have in mind problems of dimension three or lower ($d \leq 3$), for which traditional numerical PDE methods are the methods of choice.
\begin{equation}
	\begin{cases}
		\displaystyle{
			\min \left (
				- \sup_{b \in B} \left \{
					\frac{\partial u}{\partial t} + L_b u + f(\cdot, b)
				\right \},
				u - \mathcal{M} u
			\right ) = 0,
		}
		& \text{on } [0, T) \times \mathbb{R}^d
		\\
		\displaystyle{
			u(T, \cdot) = \max \left ( g, \mathcal{M} u(T, \cdot) \right ),
		}
		& \text{on } \mathbb{R}^d
	\end{cases}
	\label{eq:hjbqvi}
\end{equation}
where $L_b$ is the second order operator
\begin{equation}
	L_b u(t, x)
	\coloneqq \left \langle \mu(x, b), D_x u(t, x) \right \rangle
	+ \frac{1}{2} \trace( \sigma \sigma^\intercal(x, b) D_x^2 u(t, x) )
	\label{eq:generator}
\end{equation}
and $\mathcal{M}$ is the so-called ``intervention'' operator
\begin{equation}
	\mathcal{M} u(t, x)
	\coloneqq \sup_{z \in Z(t, x)} \left \{
		u(t, x + \Gamma(t, x, z)) + K(t, x, z)
	\right \}.
	\label{eq:intervention}
\end{equation}
In \cref{eq:generator}, $D_x u$ and $D_x^2 u$ are the gradient vector and second derivative matrix of $u$ with respect to the spatial variable $x$.
In \cref{eq:intervention}, $\cup_{t, x} Z(t, x)$ is a subset of some metric space $Y$ and $\Gamma$ and $K$ are maps from $[0, T] \times \mathbb{R}^d \times Y$ to $\mathbb{R}^d$ and $\mathbb{R}$, respectively.

It is well-known (see, e.g., \cite[Chapter 7]{MR2109687}) that the solution of the HJBQVI can be obtained as the limit of the sequence $(u^k)_k$ where $u^0 = -\infty$ and, for $k \geq 1$,
\[
	\begin{cases}
		\displaystyle{
			\min \left (
				- \sup_{b \in B} \left \{
					\frac{\partial u^k}{\partial t} + L_b u^k + f(\cdot, b)
				\right \},
				u^k - \mathcal{M} u^{k-1}
			\right ) = 0,
		}
		& \text{on } [0, T) \times \mathbb{R}^d
		\\
		\displaystyle{
			u^k(T, \cdot) = \max \left ( g, \mathcal{M} u^{k-1}(T, \cdot) \right ),
		}
		& \text{on } \mathbb{R}^d.
	\end{cases}
\]
The above is a variational (not quasi-variational) inequality since the obstacle $\mathcal{M} u^{k - 1}$ does not depend on the solution $u^k$.
This approach, referred to as iterated optimal stopping, suggests a simple algorithm for solving the HJBQVI numerically: fix mesh sizes $\Delta t$ and $\Delta x$ and compute a finite difference approximation to $u^1$, use that to compute an approximation to $u^2$, etc. until convergence up to a desired tolerance.
Unfortunately, this approach suffers from a high space complexity \cite{MR3150265}.

In light of this, a previous work of the first author \cite{MR3493959} considers various schemes which do not resort to a variational approximation as per the previous paragraph.
In \cite{MR3493959}, the focus is on the theory and implementation of computational methods for solving these schemes; no convergence proofs are given.
In this work, we close the gap by giving rigorous convergence results.
The schemes in \cite{MR3493959} are ``implicit'' in the sense that second derivative terms are approximated using information from the current timestep, ensuring unconditional stability.
This is in contrast to so-called ``explicit'' schemes, which approximate second derivative terms using information from the previously computed timestep, requiring a harsh timestep-size restriction of the form $\Delta t = \const (\Delta x)^2$ to ensure stability (see, e.g., \cite[Chapter 4]{MR1217486}).

Our closest related works \cite{MR2407322,MR2597608} introduce implicit (in the sense above) finite difference schemes to solve HJBQVIs arising from insurance applications.
However,
\begin{itemize}
	\item
		\cite{MR2407322,MR2597608} assume a strong form of comparison principle for the HJBQVI which, to the best of our knowledge, does not exist in the literature.
		%in order to apply the Barles-Souganidis framework (BSF) to prove convergence, \cite{MR2407322,MR2597608} assume a strong form of comparison principle for the HJBQVI which, to the best of our knowledge, does not exist in the literature.
		In contrast, we do not make any such assumptions, relying only on a well-known comparison principle (see \cref{app:comparison_principle}).
		%Our approach requires a nontrivial modification of the BSF.
	\item
		Moreover, \cite{MR2407322,MR2597608} assume a specific form for the intervention operator $\mathcal{M}$ and as such, the convergence proofs do not generalize.
		For example, \cite{MR2407322} considers a two-dimensional problem ($x = (x_1, x_2)$) with
		\[
			\mathcal{M} u(t, x)
			= \sup_{z \in [0, x_2]} \left \{
				u(t, \max \{x_1 - z, 0 \}, x_2 - z) + \kappa z - c
			\right \}
		\]
		where $\kappa$ and $c$ are constants.
		In contrast, we do not assume specific forms for the functions $Z$, $\Gamma$, and $K$ in \cref{eq:intervention} (see, in particular, \cref{lem:intervention_limits}).
\end{itemize}

A significant part of focus is on the first issue, which is subtle in nature.
To briefly sketch the issue, we first rewrite the HJBQVI in a more abstract form.
Namely, let
\begin{multline}
	F( (t, x), r, (a, p), A, \ell ) \coloneqq
	\\
	\begin{cases}
		\min (
			- \sup_{b \in B} \{
				a
				{+} \langle \mu(x, b), p \rangle
				{+} \frac{1}{2} \operatorname{trace}(\sigma \sigma^\intercal(x, b) A)
				{+} f(t, x, b)
			\},
			r - \ell
		),
		& \text{if } t < T
		\\
		\min (
			r - g(x),
			r - \ell
		),
		& \text{if } t = T
	\end{cases}
	\label{eq:hjbqvi_operator}
\end{multline}
where we have used the notation $(a, p)$ to distinguish between the time and spatial derivatives (i.e., $a = \partial u / \partial t(t, x)$ and $p = D_x u(t, x)$) and $A$ to refer to the second spatial derivative (i.e., $A = D_x^2 u(t, x)$).
Letting $D \coloneqq (\partial u / \partial t, D_x)$, \cref{eq:hjbqvi} becomes
\begin{equation}
	F( y, u(y), D u(y), D_x^2 u(y), \mathcal{M} u(y) ) = 0,
	\mathfor y = (t, x) \in [0, T] \times \mathbb{R}^d.
	\label{eq:nonlocal}
\end{equation}
Now, there are a few reasonable notions of viscosity solution for \cref{eq:nonlocal}.
Roughly speaking, one notion is obtained by replacing derivatives of $u$ with those of a test function $\varphi$:
\begin{equation}
	F(y, u(y), D \varphi(y), D_x^2 \varphi(y), \mathcal{M} u(y)) = 0.
	\tag{def.1} \label{eq:nonlocal_definition}
\end{equation}
Another is obtained by replacing instead \emph{all} terms involving $u$:
\begin{equation}
	F(y, \varphi(y), D \varphi(y), D_x^2 \varphi(y), \mathcal{M} \varphi(y)) = 0.
	\tag{def.2} \label{eq:local_definition}
\end{equation}
Since the operator $\mathcal{M}$ is nondecreasing (i.e., $\mathcal{M} u \leq \mathcal{M} w$ whenever $u \leq w$), an upper semicontinuous (USC) subsolution in the sense of \cref{eq:nonlocal_definition} is also a USC subsolution in the sense of \cref{eq:local_definition}.
The same is true for lower semicontinuous (LSC) supersolutions and as such, a comparison principle in the sense of \cref{eq:local_definition} implies a comparison principle in the sense of \cref{eq:nonlocal_definition} (see \cref{sec:discontinuous_solutions} for details).

As hinted at above, the results of \cite{MR2407322,MR2597608} require the stronger \cref{eq:local_definition}-comparison principle which, to our knowledge, does not exist in the literature for the HJBQVI.
To be able to rely only on a well-known \cref{eq:nonlocal_definition}-comparison principle, we introduce the notion of \emph{nonlocal consistency}.
Analogously to the Barles-Souganidis framework (BSF) for local equations \cite{MR1115933}, we establish that a monotone, stable, and nonlocally consistent scheme whose limiting equation satisfies a \cref{eq:nonlocal_definition}-comparison principle is convergent.
We use this to prove the convergence of the implicit schemes from \cite{MR3493959}.

This work is organized as follows.
\Cref{sec:discontinuous_solutions} discusses the different notions of viscosity solutions and their relationship with the BSF.
\Cref{sec:convergence_result} introduces the notion of nonlocal consistency and gives the BSF-type convergence result described in the previous paragraph.
\Cref{sec:hjbqvi} applies this result to prove the convergence of various schemes for the HJBQVI.

%\section{Discontinuous viscosity solutions of nonlocal equations}
\section{Viscosity solutions of nonlocal equations}
\label{sec:discontinuous_solutions}

In this and the following section, instead of considering the HJBQVI directly, we consider the more general PDE
\begin{equation}
	F(x, u(x), Du(x), D^2 u(x), \mathcal{M} u(x)) = 0
	\text{ for } x \in \overline{\Omega}
	\label{eq:pde}
\end{equation}
where $\Omega \subset \mathbb{R}^d$ is open, $\overline{\Omega}$ is its closure, $u$ is in $B_{\loc}(\overline{\Omega})$ (the set of locally bounded real-valued maps from $\overline{\Omega}$), and $D u$ and $D^2 u$ are the formal gradient vector and second derivative matrix of $u$.
Note that no generality is lost in using $x$ instead of $(t, x)$ in \cref{eq:pde} since a ``time'' variable $t$ can be incorporated by increasing the dimensionality $d$ of the space.
With this in mind, it is clear that the HJBQVI is a special case of \cref{eq:pde} which is obtained when $F$ and $\mathcal{M}$ are defined as per \cref{eq:intervention} and \cref{eq:hjbqvi_operator}.

In our analyses, we will always assume that $\mathcal{M}$ maps $B_{\loc}(\overline{\Omega})$ to some subset of itself and that $F$ is a locally bounded real-valued map from $\overline{\Omega} \times \mathbb{R} \times \mathbb{R}^d \times \mathcal{S}^d \times \mathbb{R}$ that is \emph{elliptic} in the sense of \cite[Assumption (E)]{MR2422079}:
\[
	F(x, r, p, A, \ell_1) \geq F(x, r, p, B, \ell_2)
	\text{ for } A \preceq B \text{ and } \ell_1 \leq \ell_2
\]
where $\preceq$ is the usual semidefinite order on $\mathcal{S}^d$ (the real $d \times d$ symmetric matrices).
To stress the significance of the operator $\mathcal{M}$, we call \cref{eq:pde} a \emph{nonlocal second order equation}.

%We point out that since the set $\overline{\Omega}$ is not necessarily the closure of some open set, statements like $\varphi \in C^2(\overline{\Omega})$ should be interpreted to mean that $\varphi$ is the restriction to $\overline{\Omega}$ of a function that is twice differentiable in an open neighbourhood of $\overline{\Omega}$.

For a locally bounded real-valued function $z$ mapping from a metric space, we define its upper and lower semicontinuous envelopes by $z^*(x) \coloneqq \limsup_{y \rightarrow x} z(y)$ and $z_* \coloneqq -(-z)^*$ where $\limsup_{y \rightarrow z} z(y) \coloneqq \lim_{\epsilon \downarrow 0} (\sup \{ z(y) \colon |y - x| < \epsilon \} )$.
We are now ready to state the definition of viscosity solution hinted at in \cref{eq:nonlocal_definition}.

\begin{definition}
	\label{def:viscosity_solution}
	$u \in B_{\loc}(\overline{\Omega})$ is a (viscosity) subsolution (resp. supersolution) of \cref{eq:pde} if for all $\varphi \in C^{2}(\overline{\Omega})$ and $x \in \overline{\Omega}$ such that $u^* - \varphi$ (resp. $u_* - \varphi$) has a local maximum (resp. minimum) at $x$, we have
	\begin{align*}
		& F_*(x, u^*(x), D \varphi(x), D^2 \varphi(x), \mathcal{M} u^*(x)) \leq 0 \\
		\text{(resp. }
		& F^*(x, u_*(x), D \varphi(x), D^2 \varphi(x), \mathcal{M} u_*(x)) \geq 0
		\text{)}.
	\end{align*}
	$u$ is said to be a (viscosity) solution of \cref{eq:pde} if it is both a sub and supersolution of \cref{eq:pde}.
\end{definition}

Our usage of $F_*$ and $F^*$ above is so that we may write both the partial differential equation and its boundary conditions as a single expression (for a detailed explanation, see \cite[Pg. 274]{MR1115933}).
We now state the definition of viscosity solution hinted at in \cref{eq:local_definition}.

\begin{definition}
	\label{def:viscosity_solution_2}
	$u \in B_{\loc}(\overline{\Omega})$ is a subsolution of \cref{eq:pde} if for all $\varphi \in C^{2}(\overline{\Omega})$ and $x \in \overline{\Omega}$ such that $(u^* - \varphi)(x) = 0$ is a global maximum of $u^* - \varphi$, we have
	\[
		F_*(x, \varphi(x), D \varphi(x), D^2 \varphi(x), \mathcal{M} \varphi(x)) \leq 0.
	\]
	Supersolutions are defined symmetrically.
\end{definition}

Throughout this article, it should be assumed that unless otherwise specified, the terms subsolution/supersolution/solution refer to the concepts in \cref{def:viscosity_solution}.
When we wish to be explicit, we will write \cref{eq:nonlocal_definition}-subsolution/supersolution/solution or \cref{eq:local_definition}-subsolution/supersolution/solution.

If the operator $\mathcal{M}$ is nondecreasing (i.e., $\mathcal{M} u \leq \mathcal{M} w$ whenever $u \leq w$), then any \cref{eq:nonlocal_definition}-subsolution (resp. supersolution) is trivially a \cref{eq:local_definition}-subsolution (resp. supersolution).
To see this, let $u$ be a \cref{eq:nonlocal_definition}-subsolution, $\varphi \in C^2(\overline{\Omega})$, and $x \in \overline{\Omega}$ be such that $(u^* - \varphi)(x) = 0$ is a global maximum of $u^* - \varphi$.
In this case, for any $y \in \overline{\Omega}$,
\[
	0
	= (u^* - \varphi)(x)
	\geq (u^* - \varphi)(y).
\]
Therefore, $\varphi \geq u^*$, from which we obtain $\mathcal{M} \varphi \geq \mathcal{M} u^*$.
Using the ellipticity of $F$,
\[
	F_*(x, \varphi(x), D \varphi(x), D^2 \varphi(x), \mathcal{M} \varphi(x))
	\leq F_*(x, u^*(x), D \varphi(x), D^2 \varphi(x), \mathcal{M} u^*(x))
	\leq 0.
\]
%An identical claim holds for supersolutions.

As for the converse, a \cref{eq:local_definition}-subsolution (resp. supersolution) need not be a \cref{eq:nonlocal_definition}-subsolution (resp. supersolution), even if $\mathcal{M}$ is nondecreasing.
This is because the term $\mathcal{M} \varphi(x)$ is not determined by values of $\varphi$ in a neighbourhood of $x$.
In order to control this term, we need to impose some additional continuity:

\begin{proposition}
	Suppose the operator $\mathcal{M}$ is nondecreasing and that there exists a function $\omega : [0, \infty] \rightarrow [0, \infty]$ satisfying $\lim_{\epsilon \downarrow 0 } \omega(\epsilon) = 0$ and
	\[
		\mathcal{M}(v+\epsilon)
		\leq \mathcal{M}v + \omega(\epsilon)
		\qquad
		\text{(resp. } \mathcal{M}(v - \epsilon)
		\geq \mathcal{M}v - \omega(\epsilon)
		\text{)}
	\]
	for all $v \in C(\overline{\Omega})$ and $\epsilon > 0$.
	If $u$ is a \underline{continuous} \cref{eq:local_definition}-subsolution (resp. supersolution), it is also a \cref{eq:nonlocal_definition}-subsolution (resp. supersolution).
\end{proposition}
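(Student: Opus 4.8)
The plan is to upgrade a \cref{eq:local_definition}-test function into a global one without changing its $2$-jet at the contact point, paying only a small price in the nonlocal slot that is then absorbed using the modulus $\omega$, the monotonicity of $\mathcal{M}$, the ellipticity of $F$, and the lower/upper semicontinuity of $F_*$/$F^*$. I will only carry out the subsolution case; the supersolution case is symmetric. So let $u$ be a continuous \cref{eq:local_definition}-subsolution, $\varphi \in C^2(\overline{\Omega})$, and $x_0 \in \overline{\Omega}$ with $u - \varphi$ having a local maximum at $x_0$ (note $u^* = u$ and $\mathcal{M} u^* = \mathcal{M} u$ since $u$ is continuous). Replacing $\varphi$ by $\varphi + (u - \varphi)(x_0)$ changes neither the local maximum property nor the derivatives of $\varphi$, so I may assume $(u - \varphi)(x_0) = 0$, hence $u \leq \varphi$ on $\overline{\Omega} \cap \overline{B(x_0, \delta)}$ for some $\delta > 0$.

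The competitor is built as follows. First, since $u$ is continuous, a standard partition-of-unity smoothing from above yields, for each $n$, a function $\psi_n \in C^2(\overline{\Omega})$ with $u \leq \psi_n \leq u + 1/n$ on all of $\overline{\Omega}$. Second, by continuity of $\varphi - u$ and $(\varphi - u)(x_0) = 0$, pick $\rho_n \in (0, \delta)$ with $0 \leq \varphi - u \leq 1/n$ on $\overline{\Omega} \cap \overline{B(x_0, \rho_n)}$, together with a smooth cutoff $\eta_n$ equal to $1$ on $B(x_0, \rho_n / 2)$, supported in $B(x_0, \rho_n)$, and taking values in $[0,1]$. Set $\tilde\varphi_n \coloneqq \eta_n \varphi + (1 - \eta_n)\psi_n \in C^2(\overline{\Omega})$. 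Then $\tilde\varphi_n$ agrees with $\varphi$ on $B(x_0, \rho_n / 2)$, so $\tilde\varphi_n(x_0) = \varphi(x_0) = u(x_0)$ and the $2$-jets of $\tilde\varphi_n$ and $\varphi$ coincide at $x_0$; moreover $u \leq \tilde\varphi_n \leq u + 1/n$ on $\overline{\Omega}$ (each bound holds on $B(x_0, \rho_n)$ because $\varphi$ and $\psi_n$ both satisfy it there and $\tilde\varphi_n$ is a convex combination, and elsewhere because $\eta_n = 0$).

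Consequently $(u - \tilde\varphi_n)(x_0) = 0$ is a global maximum of $u - \tilde\varphi_n$, so \cref{def:viscosity_solution_2} gives
\[
	F_*\bigl( x_0, u(x_0), D\varphi(x_0), D^2\varphi(x_0), \mathcal{M}\tilde\varphi_n(x_0) \bigr) \leq 0
	\quad \text{for all } n .
\]
Now $u \leq \tilde\varphi_n \leq u + 1/n$ combined with the monotonicity of $\mathcal{M}$ and the hypothesized bound applied to the continuous function $u$ gives
\[
	\mathcal{M} u(x_0) \leq \mathcal{M}\tilde\varphi_n(x_0) \leq \mathcal{M}(u + 1/n)(x_0) \leq \mathcal{M} u(x_0) + \omega(1/n),
\]
so that $\mathcal{M}\tilde\varphi_n(x_0) \to \mathcal{M} u(x_0)$. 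Since $F_*$ is lower semicontinuous and only its last argument varies with $n$,
\[
	F_*\bigl( x_0, u(x_0), D\varphi(x_0), D^2\varphi(x_0), \mathcal{M} u(x_0) \bigr)
	\leq \liminf_{n} F_*\bigl( x_0, u(x_0), D\varphi(x_0), D^2\varphi(x_0), \mathcal{M}\tilde\varphi_n(x_0) \bigr)
	\leq 0,
\]
which is precisely the subsolution inequality of \cref{def:viscosity_solution}. The supersolution case repeats this with $u - 1/n \leq \tilde\varphi_n \leq u$, the bound $\mathcal{M}(v - \epsilon) \geq \mathcal{M} v - \omega(\epsilon)$, and the upper semicontinuity of $F^*$.

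The only genuinely technical point I expect is the global smoothing-from-above step: producing $\psi_n \in C^2(\overline{\Omega})$ squeezed between $u$ and $u + 1/n$ when $\overline{\Omega}$ need not be compact. This is routine but slightly fiddly — extend $u$ to an element of $C(\mathbb{R}^d)$ by Tietze, take a locally finite cover by balls on each of which the oscillation of the extension is below $1/(2n)$, and form the supremum-valued combination against a subordinate smooth partition of unity. Everything past that is bookkeeping: the constant-shift form of the hypothesis on $\mathcal{M}$ is exactly what converts the sup-norm estimate $\lVert \tilde\varphi_n - u \rVert_\infty \leq 1/n$ into the pointwise estimate $\mathcal{M}\tilde\varphi_n(x_0) \leq \mathcal{M} u(x_0) + \omega(1/n)$ needed to pass to the limit.
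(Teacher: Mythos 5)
Your proof is correct and follows essentially the same route as the paper's: construct a global smooth majorant of $u$ within $\epsilon$ (resp.\ $1/n$) via a partition of unity, glue it to $\varphi$ with a cutoff to get a \cref{eq:local_definition}-admissible test function with the same $2$-jet at the contact point, control $\mathcal{M}$ using monotonicity and the modulus $\omega$, and pass to the limit via lower semicontinuity of $F_*$. The only cosmetic difference is that you spell out the LSC-of-$F_*$ step, whereas the paper packages the same observation behind an ellipticity inequality before ``taking limit inferiors.''
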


The above generalizes \cite[Theorem 3.1]{MR2486085}, \cite[Proposition 1.2]{MR784578}, and \cite[Section 4]{MR2735526}.
We point out that the requirement involving the modulus of continuity $\omega$ is rather innocuous.
For example, when $\mathcal{M}$ is the intervention operator from the introduction, the choice of $\omega(\epsilon) = \epsilon$ satisfies the requirement under mild conditions (e.g., $Z(t, x)$ is nonempty and compact for each $(t, x)$ and $\Gamma$ and $K$ are continuous; see also assumptions \ref{enu:vanilla_start} and \ref{enu:vanilla_middle} of the sequel).

\begin{proof}
	Let $u$ be a continuous \cref{eq:local_definition}-subsolution (we omit the supersolution case, which is handled similarly).
	Let $\varphi \in C^2(\overline{\Omega})$ and $x \in \overline{\Omega}$ be such that $u - \varphi$ has a local maximum at $x$.
	Without loss of generality, we may assume $u(x) = \varphi(x)$.
	Let $\epsilon > 0$.
	We claim that, by the continuity of $u$, we can construct a smooth function $\psi$ such that $u \leq \psi \leq u + \epsilon$ everywhere and $\psi = \varphi$ on a neighbourhood of $x$.
	In this case,
	\[
		\mathcal{M} u
		\leq \mathcal{M} \psi
		\leq \mathcal{M}(u + \epsilon)
		\leq \mathcal{M}u + \omega(\epsilon)
		\eqqcolon \mathcal{M}_\epsilon u
	\]
	and hence
	\[
		F_*(x, u(x), D \varphi(x), D^2 \varphi(x), \mathcal{M}_\epsilon u(x))
		\leq F_*(x, \psi(x), D \psi(x), D^2 \psi(x), \mathcal{M} \psi(x))
		\leq 0.
	\]
	Taking limit inferiors of this inequality with respect to $\epsilon$ establishes the desired result.

	We now return to the claim above.
	To simplify notation, we can, without loss of generality, assume $u$ and $\varphi$ are maps from $\mathbb{R}^d$. %(use the Tietze and Whitney extension theorems).
	By Lindel\"{o}f's lemma, we can find a countable cover $\{ U_n \}_n$ of $\mathbb{R}^d$ by open balls $U_n$ centred at points $x_n$ and satisfying $|u(x_n) - u(\cdot)| < \epsilon / 2$ on $U_n$.
	By virtue of this, we can also find a smooth partition of unity $\{ \chi_n \}_n \subset C^{\infty}(\mathbb{R}^d)$ subordinate to $\{ U_n \}_n$.
	Let
	\[
		\chi(\cdot) = \sum_n c_n \chi_n(\cdot)
		\mathwhere
		c_n \coloneqq \sup u(U_n).
	\]
	Now, fix $y$ and let $\mathcal{N} \coloneqq \{ n \colon \chi_n(y) \neq 0 \}$.
	Then, since $\mathcal{N}$ is a finite set,
	\[
		\chi(y)
		= \sum_n c_n \chi_n(y)
		\geq \left( \min_{n \in \mathcal{N}} c_n \right) \sum_n \chi_n(y)
		= \min_{n \in \mathcal{N}} c_n
		\geq u(y).
	\]
	Similarly, $\chi(y) \leq \max_{n \in \mathcal{N}} c_n = c_m$ for some $m \in \mathcal{N}$.
	By definition, $c_m = u(w)$ for some $w$ and hence
	\[
		u(w) - u(y)
		= \left| u(w) - u(y) \right|
		\leq \left| u(w) - u(x_m) \right| + \left| u(x_m) - u(y) \right|
		\leq \epsilon.
	\]
	Since $y$ was arbitrary, this establishes $u \leq \chi \leq u + \epsilon$.

	Now, pick $r > 0$ small enough so that $u \leq \varphi \leq u + \epsilon$ on $\overline{B(x, 2r)}$, the closed ball centred at $x$ with radius $2r$.
	Let $\zeta$ be a smooth cutoff function satisfying $0 \leq \zeta \leq 1$, $\zeta = 1$ on $\overline{B(x, r)}$, and $\zeta = 0$ outside of $B(x, 2r)$.
	Then, the function $\psi$ defined by
	\[
		\psi(y) \coloneqq \zeta(y) \varphi(y) + \left(1 - \zeta(y)\right) \chi(y)
	\]
	satisfies our requirements.
	%Then, for any $\epsilon > 0$, a standard argument \cite[Exercise 6(ii)]{MR3002594} yields a function $\psi \in C^2(\overline{\Omega})$ which agrees with $\varphi$ on a neighbourhood of $x$ and satisfies $u \leq \psi \leq u + \epsilon$ everywhere.
\end{proof}

We close this section by discussing the BSF.
In order to do so, we first recall the notion of a comparison principle.
Let $U$ be some subset of $B_{\loc}(\overline{\Omega})$.
For $k=1,2$, we say \cref{eq:pde} satisfies a (def.$k$)-comparison principle in $U$ whenever the following holds:
\[
	\text{if } u, w \in U
	\text{ are a (def.} k \text{)-subsolution}
	\text{ and (def.} k \text{)-supersolution of }
	\cref{eq:pde} \text{, then } u^* \leq w_*.
\]
%It should be assumed that unless otherwise specified, the terms subsolution and supersolution above mean \cref{eq:nonlocal_definition}-subsolution and \cref{eq:nonlocal_definition}-supersolution, respectively.
%When we wish to be explicit about the type of subsolution and supersolution for which the comparison principle holds, we will write \cref{eq:nonlocal_definition}-comparison principle or \cref{eq:local_definition}-comparison principle.
The discussion preceding this paragraph implies that if $\mathcal{M}$ is nondecreasing, a \cref{eq:local_definition}-comparison principle implies a \cref{eq:nonlocal_definition}-comparison principle (but not vice versa).

The BSF provides a general approach for proving the convergence of finite difference schemes to the viscosity solution $u$ of a PDE \cite{MR1115933}.
The idea is as follows: letting $u^{\Delta x} : \overline{\Omega} \rightarrow \mathbb{R}$ denote the solution of a finite difference scheme (for a fixed grid size $\Delta x > 0$), the BSF outlines three sufficient conditions (\emph{monotonicity}, \emph{stability}, and \emph{consistency}) which guarantee the functions $\overline{u}$ and $\underline{u}$ defined by
\[
	\overline{u}(x) \coloneqq \limsup_{\substack{
		\Delta x \downarrow 0 \\
		y \rightarrow x
	}} u^{\Delta x}(y)
	\mathand
	\underline{u}(x) \coloneqq \liminf_{\substack{
		\Delta x \downarrow 0 \\
		y \rightarrow x
	}} u^{\Delta x}(y)
\]
to be a subsolution/supersolution pair of the limiting equation.
Applying a comparison principle, one concludes $\overline{u} = \underline{u} = u$.
In our context, the subtlety here is that if $\overline{u}$ and $\underline{u}$ are a \cref{eq:local_definition}-subsolution/supersolution pair, the above argument requires a \cref{eq:local_definition}-comparison principle.

More generally, the BSF characterizes when a family of approximations $(u^\rho)_{\rho > 0}$ converges to $u$ as $\rho \downarrow 0$ (in the example above, $\rho = \Delta x$ was the grid size).
The BSF represents an approximation $u^\rho$ as a solution of the equation
\begin{equation}
	S(\rho, x, u^\rho(x), [u^\rho]_x) = 0
	\text{ for } x \in \overline{\Omega}.
	\label{eq:barles_scheme}
\end{equation}
The function $S$ is referred to as the approximation scheme.
Given a function $w$, the symbol $[w]_x$ (cf. \cite{MR1916291}) denotes a function which agrees with $w$ everywhere except possibly at $x$:
\[
	[w]_x(y) \coloneqq
	%w \cdot \boldsymbol{1}_{ \overline{\Omega} \setminus \{x\} }
	\begin{cases}
		w(y) & \text{if } y \neq x \\
		0    & \text{otherwise}.
	\end{cases}
\]
While the BSF is posed in terms of local equations of the form
\[
	F(x, u(x), D u(x), D^2 u(x)) = 0,
\]
the authors of \cite{MR2407322,MR2597608} (see, in particular, \cite[Lemma 5.2]{MR2407322} and \cite[Definition 6.11]{MR2597608}) noticed that the BSF could also be applied to nonlocal equations of the form \cref{eq:pde} by changing the consistency condition of the BSF \cite[Eq (2.4)]{MR1115933} to
\begin{subequations}
	\begin{align}
		\limsup_{\substack{
			\rho \downarrow 0 \\
			y \rightarrow x \\
			\xi \rightarrow 0
		}}
		%\frac{
			S(\rho, y, \varphi(y) + \xi, [\varphi + \xi]_y)
		%}{\rho}
		& \leq
		F^*(x, \varphi(x), D \varphi(x), D^2 \varphi(x), \mathcal{M} \varphi(x))
		\\
		\liminf_{\substack{
			\rho \downarrow 0 \\
			y \rightarrow x \\
			\xi \rightarrow 0
		}}
		%\frac{
			S(\rho, y, \varphi(y) + \xi, [\varphi + \xi]_y)
		%}{\rho}
		& \geq
		F_*(x, \varphi(x), D \varphi(x), D^2 \varphi(x), \mathcal{M} \varphi(x)).
	\end{align}
	\label{eq:local_consistency}%
\end{subequations}
%In this case, the BSF provides a framework for proving convergence to \cref{eq:local_definition}-solutions.
%Indeed, this approach was used in \cite{MR2407322,MR2597608}  to prove the convergence of some implicit schemes for HJBQVIs arising from insurance applications.
The issue with this approach, however, is that it requires a \cref{eq:local_definition}-comparison principle.
In \cite{MR2407322,MR2597608}, the authors avoid the issue by simply assuming that such a comparison principle holds (see, in particular, \cite[Assumption 5.1]{MR2407322} and \cite[Assumption 6.3]{MR2597608}), though it is not clear if such an assumption is justifiable for HJBQVIs.

We show in the next section that an alternative notion of consistency can be used so that only a \cref{eq:nonlocal_definition}-comparison principle is required.
Ultimately, we will use this to prove the convergence of various finite difference schemes for HJBQVIs, relying only on a well-known \cref{eq:nonlocal_definition}-comparison principle given in \cref{app:comparison_principle}.

\section{Convergence result}
\label{sec:convergence_result}

In our setting, approximation schemes take the form
\begin{equation}
	S(\rho, x, u^\rho(x), [u^\rho]_x, \mathcal{N}^\rho u^\rho(x)) = 0
	\text{ for } x \in \overline{\Omega}
	\label{eq:scheme}
\end{equation}
(compare with \cref{eq:barles_scheme}).
In particular, $S$ is a real-valued map from $(0, \infty) \times \overline{\Omega} \times \mathbb{R} \times B(\overline{\Omega}) \times \mathbb{R}$, $\mathcal{N}^{\rho}$ maps $B(\overline{\Omega})$ to some subset of itself, and $B(\overline{\Omega})$ is the set of bounded real-valued maps from $\overline{\Omega}$.
Intuitively, $\mathcal{N}^\rho$ will serve as an approximation of the operator $\mathcal{M}$.
We refer to a function $u^\rho$ in $B(\overline{\Omega})$ as a solution of the scheme if it satisfies \cref{eq:scheme}.

For brevity, let $\mathbb{R}_+ \coloneqq (0, \infty)$.
We will always assume the existence of functions $h_1 : \mathbb{R}_+ \rightarrow \mathbb{R}_+$ and $h_2 : \mathbb{R}_+ \times \overline{\Omega} \rightarrow \mathbb{R}$ satisfying
\begin{equation}
	\lim_{
		\rho \downarrow 0
	} h_1(\rho)
	= 0
	\mathand
	\lim_{\substack{
		\rho \downarrow 0 \\
		y \rightarrow x
	}}
	%\frac{
		h_2(\rho, y)
	%}{\rho}
	= 0
	\label{eq:margin_assumption_1}
\end{equation}
along with
\begin{equation}
	\left |
		S(\rho, x, r \pm h_1(\rho), u, \ell)
		- S(\rho, x, r, u, \ell)
	\right |
	\leq h_2(\rho, x)
	\text{ for all } \rho, x, r, u, \ell.
	\label{eq:margin_assumption_2}
\end{equation}
This assumption is a technical one required to allow solutions $u^\rho$ of the scheme to be discontinuous (see \cref{rem:discontinuous_solutions} for an explanation).
It is readily verified that all schemes in this work satisfy this assumption.

% TODO: demonstrate this

We will show that if a scheme $S$ is monotone, stable, and consistent, it converges to the unique solution of \cref{eq:pde}, provided that \cref{eq:pde} satisfies a \cref{eq:nonlocal_definition}-comparison principle.
While our notions of monotonicity and stability are identical to those in the BSF, our notion of consistency will differ from \cref{eq:local_consistency}.
We now state precisely these notions.

A scheme $S$ is \emph{stable} if (cf. \cite[Eq. (2.3)]{MR1115933})
\[
	\text{there exists a solution } u^{\rho} \text{ of } \cref{eq:scheme}
	\text{ for each } \rho > 0
	\text{ and } \textstyle{\sup_{\rho>0}} \Vert u^{\rho} \Vert_{\infty} < \infty.
\]
A scheme $S$ is \emph{monotone} if (cf. \cite[Eq. (2.2)]{MR1115933})
\[
	S(\rho, x, r, u, \ell)
	\leq
	S(\rho, x, r, w, \ell)
	\text{ whenever }
	u \geq w \text{ pointwise}.
\]
Note that the monotonicity requirement does not involve the operator $\mathcal{N}^\rho$ and as such, the reader may be tempted to guess that high order discretizations of the nonlocal operator $\mathcal{M}$ are possible.
Unfortunately, this is not the case: high order discretizations are generally precluded by our notion of consistency (see \cref{rem:higher_order_interpolation}).

Before we introduce our notion of consistency, we recall half-relaxed limits.
For a family $(z^{\rho})_{\rho>0}$ of real-valued maps from a metric space such that $(\rho,x)\mapsto z^{\rho}(x)$ is locally bounded, we define the upper and lower half-relaxed limits $\overline{z}$ and $\underline{z}$ by
\[
	\overline{z}(x) \coloneqq \limsup_{\substack{
		\rho \downarrow 0 \\
		y \rightarrow x
	}}
	z^{\rho}(y)
	\mathand
	\underline{z}(x) \coloneqq \liminf_{\substack{
		\rho \downarrow 0\\
		y \rightarrow x
	}}
	z^{\rho}(y)
\]
where
\[
	\limsup_{\substack{
		\rho \downarrow 0 \\
		y \rightarrow x
	}}
	z^{\rho}(y)
	\coloneqq
	\lim_{\epsilon \downarrow 0} \left(
		\sup \left \{
			z^{\rho}(y) \colon |x - y| < \epsilon \text{ and } 0 < \rho < \epsilon
		\right \}
	\right)
\]
and the limit inferior is defined similarly (cf. \cite[Definition 1.4]{MR1484411}).
Now, a scheme $S$ is \emph{nonlocally consistent} if for each family $(w^{\rho})_{\rho>0}$
of uniformly bounded real-valued maps from $\overline{\Omega}$, $\varphi\in C^{2}(\overline{\Omega})$, and $x\in\overline{\Omega}$, we have
\begin{subequations}
	\begin{align}
		\limsup_{\substack{
			\rho \downarrow 0 \\
			y \rightarrow x \\
			\xi \rightarrow 0
		}}
		%\frac{
			S(\rho, y, \varphi(y) + \xi, [\varphi + \xi]_y, \mathcal{N}^\rho w^\rho(y))
		%}{\rho}
		& \leq
		F^*(x, \varphi(x), D \varphi(x), D^2 \varphi(x), \mathcal{M} \underline{w}(x))
		\label{eq:superconsistency}
		\\
		\liminf_{\substack{
			\rho \downarrow 0 \\
			y \rightarrow x \\
			\xi \rightarrow 0
		}}
		%\frac{
			S(\rho, y, \varphi(y) + \xi, [\varphi + \xi]_y, \mathcal{N}^\rho w^\rho(y))
		%}{\rho}
		& \geq
		F_*(x, \varphi(x), D \varphi(x), D^2 \varphi(x), \mathcal{M} \overline{w}(x)).
		\label{eq:subconsistency}
	\end{align}
	\label{eq:consistency}%
\end{subequations}
While \cref{eq:local_consistency} and \cref{eq:consistency} are aesthetically similar, the latter does not apply test functions to the operator $\mathcal{M}$.
This is perhaps not too surprising if we think about \cref{eq:local_consistency} and \cref{eq:consistency} as notions of consistency relating to \cref{eq:local_definition} and \cref{eq:nonlocal_definition}, respectively: in \cref{def:viscosity_solution_2}, test functions \emph{are} applied to the operator $\mathcal{M}$ while in \cref{def:viscosity_solution}, test functions \emph{are not} applied to $\mathcal{M}$.
We are now ready to state the BSF-type convergence result.
\begin{theorem}
	\label{thm:convergence_result}
	Let $S$ be a monotone, stable, and nonlocally consistent scheme whose limiting equation \cref{eq:pde} satisfies a \cref{eq:nonlocal_definition}-comparison principle (in $B(\overline{\Omega})$).
	Then, as $\rho \downarrow 0$, the solution $u^{\rho}$ of \cref{eq:scheme} converges locally uniformly to the unique \cref{eq:nonlocal_definition}-solution of \cref{eq:pde} in $B(\overline{\Omega})$.
\end{theorem}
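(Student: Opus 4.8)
The plan is to run the Barles--Souganidis argument, the point being that the nonlocal consistency condition \cref{eq:consistency} is designed precisely so that the half-relaxed limits of $(u^{\rho})$ are a sub/supersolution pair in the sense of \cref{def:viscosity_solution} (and \emph{not} \cref{def:viscosity_solution_2}), so that only the assumed \cref{eq:nonlocal_definition}-comparison principle is needed. Stability gives $M \coloneqq \sup_{\rho > 0} \Vert u^{\rho} \Vert_{\infty} < \infty$, so the half-relaxed limits
\[
	\overline{u}(x) \coloneqq \limsup_{\rho \downarrow 0,\, y \to x} u^{\rho}(y)
	\mathand
	\underline{u}(x) \coloneqq \liminf_{\rho \downarrow 0,\, y \to x} u^{\rho}(y)
\]
belong to $B(\overline{\Omega})$, satisfy $\underline{u} \leq \overline{u}$ pointwise, and are respectively USC and LSC by construction.

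First I would show $\overline{u}$ is a \cref{eq:nonlocal_definition}-subsolution, the supersolution claim for $\underline{u}$ being symmetric. Take $\varphi \in C^{2}(\overline{\Omega})$ with $\overline{u} - \varphi$ attaining a local maximum at $x$; after the harmless modifications $\varphi \mapsto \varphi(\cdot) + |\cdot - x|^{4}$ and $\varphi \mapsto \varphi + \const$ (which leave $D\varphi(x)$ and $D^{2}\varphi(x)$ unchanged) we may assume the maximum is strict over $\overline{B(x, r)} \cap \overline{\Omega}$ with $\overline{u}(x) = \varphi(x)$, and --- since only the behaviour of $\varphi$ near $x$ enters the test --- modify $\varphi$ by a smooth cutoff outside $\overline{B(x, r)}$ so that $\varphi \geq M + 1$ there. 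The standard half-relaxed-limit lemma (taking near-maximizers to allow for discontinuity of $u^{\rho}$) produces $\rho_{n} \downarrow 0$ and $y_{n} \to x$ with $u^{\rho_{n}}(y_{n}) \to \overline{u}(x)$ and $(u^{\rho_{n}} - \varphi)(y_{n}) \geq \sup_{\overline{B(x, r)} \cap \overline{\Omega}}(u^{\rho_{n}} - \varphi) - h_{1}(\rho_{n})$. Setting $\xi_{n} \coloneqq u^{\rho_{n}}(y_{n}) - \varphi(y_{n}) + h_{1}(\rho_{n}) \to 0$, we get $u^{\rho_{n}} \leq \varphi + \xi_{n}$ on all of $\overline{\Omega}$, hence $[u^{\rho_{n}}]_{y_{n}} \leq [\varphi + \xi_{n}]_{y_{n}}$. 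Chaining (i) the scheme equation $S(\rho_{n}, y_{n}, u^{\rho_{n}}(y_{n}), [u^{\rho_{n}}]_{y_{n}}, \mathcal{N}^{\rho_{n}} u^{\rho_{n}}(y_{n})) = 0$, (ii) the margin bound \cref{eq:margin_assumption_2} to replace $u^{\rho_{n}}(y_{n}) = \varphi(y_{n}) + \xi_{n} - h_{1}(\rho_{n})$ in the third slot by $\varphi(y_{n}) + \xi_{n}$ at a cost of $h_{2}(\rho_{n}, y_{n})$, and (iii) monotonicity in the function slot to replace $[u^{\rho_{n}}]_{y_{n}}$ by $[\varphi + \xi_{n}]_{y_{n}}$, gives
\[
	S(\rho_{n}, y_{n}, \varphi(y_{n}) + \xi_{n}, [\varphi + \xi_{n}]_{y_{n}}, \mathcal{N}^{\rho_{n}} u^{\rho_{n}}(y_{n})) \leq h_{2}(\rho_{n}, y_{n}).
\]
Taking $\liminf_{n}$, using $h_{2}(\rho_{n}, y_{n}) \to 0$, and applying \cref{eq:subconsistency} with the family $w^{\rho} = u^{\rho}$ (so $\overline{w} = \overline{u}$) yields $F_{*}(x, \varphi(x), D\varphi(x), D^{2}\varphi(x), \mathcal{M}\overline{u}(x)) \leq 0$, which since $\overline{u}^{*} = \overline{u}$ is exactly the subsolution inequality of \cref{def:viscosity_solution}.

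Given this, $\overline{u}$ and $\underline{u}$ are a \cref{eq:nonlocal_definition}-subsolution/supersolution pair in $B(\overline{\Omega})$, so the \cref{eq:nonlocal_definition}-comparison principle gives $\overline{u} = \overline{u}^{*} \leq \underline{u}_{*} = \underline{u}$; with $\underline{u} \leq \overline{u}$ this forces $\overline{u} = \underline{u} =: u$, which is both USC and LSC, hence continuous, and is the \cref{eq:nonlocal_definition}-solution (unique in $B(\overline{\Omega})$ by the comparison principle). Finally, $\overline{u} = \underline{u} = u$ with $u$ continuous implies locally uniform convergence $u^{\rho} \to u$: otherwise there would be a compact $K$, an $\epsilon > 0$, $\rho_{n} \downarrow 0$, and $y_{n} \in K$ with $|u^{\rho_{n}}(y_{n}) - u(y_{n})| \geq \epsilon$, and passing to a subsequence with $y_{n} \to y \in K$ would contradict $\underline{u}(y) \leq \liminf_{n} u^{\rho_{n}}(y_{n}) \leq \limsup_{n} u^{\rho_{n}}(y_{n}) \leq \overline{u}(y)$ together with continuity of $u$ at $y$.

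The main obstacle is the sub/supersolution step, and within it juggling three issues at once: solutions of the scheme may be discontinuous (handled by working with near-maximizers and absorbing the resulting $h_{1}(\rho)$-perturbation through \cref{eq:margin_assumption_2}); monotonicity is a statement about the \emph{global} ordering of functions (which forces the modification of $\varphi$ far from $x$, using the stability bound); and one must check that the $\mathcal{M}\overline{w}$ and $\mathcal{M}\underline{w}$ on the right-hand sides of \cref{eq:consistency} line up with the $\mathcal{M}\overline{u}$ and $\mathcal{M}\underline{u}$ demanded by \cref{def:viscosity_solution} --- this is exactly why consistency is formulated without applying test functions to $\mathcal{M}$, and is what lets the whole argument run on a \cref{eq:nonlocal_definition}-comparison principle alone.
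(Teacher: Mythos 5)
The proposal is correct and follows essentially the same Barles--Souganidis route as the paper's proof: half-relaxed limits, near-maximizers (to handle discontinuous $u^\rho$) together with the margin assumption \cref{eq:margin_assumption_2}, monotonicity, and nonlocal consistency applied with $w^\rho=u^\rho$, then the \cref{eq:nonlocal_definition}-comparison principle. The only difference is cosmetic packaging of the near-maximizer argument as a ``standard lemma'' rather than spelling out the subsequence extraction, plus a slightly more explicit final step from $\overline u=\underline u$ to locally uniform convergence; both are faithful to the paper's argument.
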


The proof is very similar to that of \cite[Theorem 2.1]{MR1115933}.
Regardless, we give a detailed proof so that the reader can see the motivation behind nonlocal consistency.

\begin{proof}
	Let $\overline{u} \geq \underline{u}$ denote the half-relaxed limits of the family $(u^{\rho})_{\rho>0}$.
	We claim that $\overline{u}$ is a subsolution and $\underline{u}$ is a supersolution of \cref{eq:pde}.
	In this case, the comparison principle yields $\overline{u} = \overline{u}^* \leq \underline{u}_* = \underline{u}$, from which the desired result follows.

	We now prove that $\overline{u}$ is a subsolution (that $\underline{u}$ is a supersolution is proved similarly).
	To this end, let $x\in\overline{\Omega}$ be a local maximum point of $\overline{u} - \varphi$ for some $\varphi \in C^{2}(\overline{\Omega})$.
	By definition, we can find a neighbourhood (relative to $\overline{\Omega}$) $U$ whose closure is compact and on which $x$ is a global maximum point of $\overline{u} - \varphi$.
	Without loss of generality, we may assume the maximum is strict, $\overline{u}(x) = \varphi(x)$, and $\varphi \geq \sup_{\rho} \Vert u^{\rho} \Vert_{\infty}$ outside $U$.
	By the definition of $\overline{u}$, we can find a sequence $(\rho_n, x_n)_n$ such that $\rho_n \downarrow 0$, $x_n \rightarrow x$, and $u^{\rho_n}(x_n) \rightarrow \overline{u}(x)$.
	For each $n$, pick $y_n \in U$ such that
	\begin{equation}
		(u^{\rho_n} - \varphi)(y_n) + h_1(\rho_n)
		\geq \sup_{y \in U} \left\{
			(u^{\rho_n} - \varphi)(y)
		\right\}
		\label{eq:sup}
	\end{equation}
	where $h_1$ is the function in \cref{eq:margin_assumption_1}.
	Now, pick a subsequence of $(\rho_n, x_n, y_n)_n$ such that its last argument converges to some point $\hat{y} \in \overline{U}$.
	With a slight abuse of notation, relabel this subsequence $(\rho_n, x_n, y_n)_n$.
	It follows that
	\begin{multline*}
		0 = (\overline{u} - \varphi)(x)
		= \lim_{n \rightarrow \infty} \left\{
			(u^{\rho_n} - \varphi)(x_n)
		\right\}
		\leq \limsup_{n \rightarrow \infty} \left\{
			(u^{\rho_n} - \varphi)(y_n) + h_1(\rho_n)
		\right\} \\
		\leq \limsup_{\substack{
			\rho \downarrow 0 \\
			y \rightarrow \hat{y}
		}}
		\left\{
			(u^\rho - \varphi)(y) + h_1(\rho)
		\right\}
		\leq (\overline{u} - \varphi)(\hat{y}).
	\end{multline*}
	Because $x$ was assumed to be a strict maximum point, the above implies $\hat{y} = x$.
	Letting $\xi_n \coloneqq (u^{\rho_n} - \varphi)(y_n) + h_1(\rho_n)$, we have $\xi_n \rightarrow 0$ and $u^{\rho_n} \leq \varphi + \xi_n$.

	The definition of $u^{\rho}$ and the monotonicity of $S$ yield
	\begin{multline*}
		0
		= S(\rho_n, y_n, u^{\rho_n}(y_n), [u^{\rho_n}]_{y_n}, \mathcal{N}^{\rho_n} u^{\rho_n}(y_n))
		\\
		\geq S(\rho_n, y_n, \varphi(y_n) + \xi_n - h_1(\rho_n), [\varphi + \xi_n]_{y_n}, \mathcal{N}^{\rho_n} u^{\rho_n}(y_n)).
	\end{multline*}
	Taking limit inferiors, we get
	\[
		0 \geq \liminf_{n \rightarrow \infty}
		S(\rho_n, y_n, \varphi(y_n) + \xi_n - h_1(\rho_n), [\varphi + \xi_n]_{y_n}, \mathcal{N}^{\rho_n} u^{\rho_n}(y_n)).
	\]
	Now, employing \cref{eq:margin_assumption_1}, \cref{eq:margin_assumption_2}, and nonlocal consistency,
	\begin{multline*}
		0
		%& \geq \liminf_{n \rightarrow \infty}
		%\frac{
		%	S(\rho_n, y_n, \varphi(y_n) + \xi_n - h_1(\rho_n), [\varphi + \xi_n]_{y_n}, \mathcal{N}^{\rho_n} u^{\rho_n}(y_n))
		%}{\rho_n}
		%\\
		%&
		\geq \liminf_{\substack{
			\rho \downarrow 0 \\
			y \rightarrow x \\
			\xi \rightarrow 0
		}}
		%\frac{
			S(\rho, y, \varphi(y)+\xi, [\varphi + \xi]_y, \mathcal{N}^\rho u^\rho(y))
		%}{\rho}
		- h_2(\rho, y) \\
		%&
		\geq F_*(x, \varphi(x), D \varphi(x), D^2 \varphi(x), \mathcal{M} \overline{u}(x)),
	\end{multline*}
	which is the desired inequality, since $\overline{u}(x) = \varphi(x)$.
\end{proof}

\begin{rem}
	\label{rem:discontinuous_solutions}
	\cref{eq:margin_assumption_1} and \cref{eq:margin_assumption_2} were needed in the proof above since the supremum in \cref{eq:sup} is not necessarily attained at a point in $\overline{U}$.
	If we restrict our attention to the case in which solutions of the scheme $u^\rho$ are continuous, then this problem disappears.
	However, our analysis requires discontinuous solutions (see, e.g., \cref{eq:piecewise_constant} in the sequel).
	This issue is discussed further in the first author's note \cite{253954}.
\end{rem}

\begin{rem}
	\label{rem:wlog}
	Recall that the functions $(w^\rho)_{\rho > 0}$ appearing in the nonlocal consistency inequalities \cref{eq:consistency} are an arbitrary family of uniformly bounded real-valued maps.
	However, a close inspection of the proof of \cref{thm:convergence_result} reveals that we only use these inequalities with $w^\rho = u^\rho$ where $u^\rho$ is a solution of the scheme.
	Therefore, in establishing nonlocal consistency we can, without loss of generality, assume that $w^\rho$ is a solution of the scheme.
\end{rem}

As in \cite{MR1115933}, the result above extends to solutions that are not necessarily bounded.
In particular, let $r:\mathbb{R}\rightarrow\mathbb{R}$ be a map defined by $r(x)\coloneqq \const \, (1 + x^d)$ where $d$ is a positive integer and let $R(\overline{\Omega})$ be the set of maps $u:\overline{\Omega}\rightarrow\mathbb{R}$ satisfying
\[
	\left|u(x)\right| \leq r(|x|) \text{ for all } x \in \overline{\Omega}.
\]
Now, relax the stability condition to read
\[
	\text{there exists a solution } u^{\rho} \in R(\overline{\Omega})
	\text{ of } \cref{eq:scheme} \text{ for each } \rho>0
\]
and the consistency condition by requiring \cref{eq:consistency} to hold more generally for families $(w^{\rho})_{\rho>0}\subset R(\overline{\Omega})$ not necessarily uniformly bounded.
Then, by replacing instances of $B(\overline{\Omega})$ by $R(\overline{\Omega})$ in \cref{thm:convergence_result}, we obtain a straightforward relaxation of \cref{thm:convergence_result} which allows for solutions of polynomial growth.
\cref{thm:convergence_result} can also be extended to the case in which the PDE involves a family of nonlocal operators (cf. \cite[Eq. (2)]{MR2422079}):
\[
	F(x, u(x), D u(x), D^2 u(x), \{ \mathcal{M}_\alpha u(x) \}_\alpha) = 0.
\]

\section{Hamilton-Jacobi-Bellman quasi-variational inequality}
\label{sec:hjbqvi}

Using the results of the previous section, we now prove the convergence of two implicit schemes for HJBQVIs.
Both schemes appear in the first author's work \cite{MR3493959}.
We make the following assumptions about the various quantities appearing in the HJBQVI \cref{eq:hjbqvi}:

\begin{enumerate}[label=(H\arabic*)]
	\item \label{enu:vanilla_start} $f$, $g$ $\mu$, $\sigma$, $\Gamma$, and $K$ are continuous functions with $f$ and $g$ bounded and $\mathcal{M} g \leq g$ where
	$
		\mathcal{M} g(x) \coloneqq \sup_{z \in Z(T, x)} \{
			g(x + \Gamma(T, x, z)) + K(T, x, z)
		\}
	$.
	\item \label{enu:vanilla_middle} $B$ is a nonempty compact metric space and $Z(t, x) \subset Y$ is a nonempty compact set for each $(t, x)$.
	\item \label{enu:vanilla_end} $\sup_{t, x, z} K(t, x, z)<0$.
\end{enumerate}

It is well-known (see, e.g., \cite{MR2109687}) that the HJBQVI is the dynamic programming equation associated to the optimal control problem whose value function is given by
\begin{equation}
	u(t, x) = \sup_\gamma \mathbb{E} \left [
		g(X_T^{t, x, \gamma})
		+ \int_t^T f(s, X_s^{t, x, \gamma}, b_s) ds
		+ \sum_{t \leq \tau_i \leq T} K(\tau_i, X_{\tau_i-}^{t, x, \gamma}, z_i)
	\right ]
	\label{eq:control_problem}
\end{equation}
where
\[
	X_s^{t, x, \gamma}
	\coloneqq x
	+ \int_t^s \mu(X_v^{t, x, \gamma}, b_v) dv
	+ \int_t^s \sigma(X_v^{t, x, \gamma}, b_v) dW_v
	+ \sum_{t \leq \tau_i \leq s} \Gamma(\tau_i, X_{\tau_i-}^{t, x, \gamma}, z_i).
\]
Here, $W$ is a $d$-dimensional Brownian motion with filtration $(\mathcal{F}_t)_{t \geq 0}$, $\gamma = (a, b)$ is a combined stochastic and impulse control\footnote{The interested reader can review \cite{MR673169,MR1240006,MR1306930,MR2109687,MR2568293,MR2735526,MR2812853,MR3053571,MR3070528,MR3071398,MR3145856,MR3200009,MR3615458,MR3626187,MR3654873}
for information and recent developments on impulse control. We do
not claim this list to be exhaustive.} with $a = (\tau_i, z_i)_{i \geq 1}$, $(\tau_i)_{i \geq 1}$ being an increasing sequence of stopping times, each $z_i$ being $\mathcal{F}_{\tau_i}$-measurable and taking values in some compact $Z(\tau_i, X_{\tau_i-}^{t, x, \gamma})$, and $b$ being a progressively measurable process taking values in some compact $B$.

In the context of the optimal control problem \cref{eq:control_problem}, the assumption $\mathcal{M} g \leq g$ implies that it is suboptimal to perform an impulse at the terminal time $T$.
The assumption \ref{enu:vanilla_end}, needed to ensure stability, can be interpreted as the controller paying a cost for the right to perform an impulse.
Lastly, we mention that the choice to write the functions $\mu$ and $\sigma$ as independent of the time coordinate $t$ was made only to simplify notation, and not due to a shortcoming of the theory.

\subsection{Penalty scheme}
\label{subsec:penalty_scheme}

In this subsection, we prove the convergence of a penalty scheme from \cite[Section 5.2]{MR3493959} to the solution of the HJBQVI.
We start with the one-dimensional case ($d = 1$), deferring for now a discussion of the complications that arise in higher dimensions.
In this case, the operator $L_b$ simplifies to
\[
	L_b u(t, x) = \mu(x, b) u_x(t, x) + \frac{1}{2} \sigma(x, b)^2 u_{xx}(t, x).
\]

To simplify presentation, we assume uniformly spaced grid points $\{ n \Delta t \}_{n = 0}^N$ and $\{ j \Delta x \}_{j = -M}^M$ in time and space with $T = N \Delta t$ and $M > 0$.
For brevity, we let $x_j \coloneqq j \Delta x$.
We assume $\Delta t$ and $\Delta x$ are related through a positive parameter $\rho$ as per
\[
	\Delta t = \const \rho
	\mathand
	\Delta x = \const \rho.
\]
For PDEs on unbounded domains, the usual approach is to truncate the domain to a bounded region (e.g., $[-Q, Q]$) and impose artificial boundary conditions.
Since these arguments are somewhat standard, we will assume
\begin{equation}
	Q \coloneqq M \Delta x \rightarrow \infty
	\mathas
	\rho \downarrow 0
	\label{eq:infinite_grid}
\end{equation}
in order to ignore artificial boundary conditions in our convergence analysis and focus instead on the main difficulties specific to the HJBQVI.
For example, to satisfy \cref{eq:infinite_grid}, we may take $M = \const \rho^{-1-\alpha}$ for some $\alpha > 0$ so that $Q = \const \rho^{-\alpha}$.
Ignoring artificial boundary conditions to focus on the main points is a common practice in the literature on elliptic equations (see, e.g., \cite[Section 3]{MR3042570}, in which the authors take an infinite grid to numerically solve a Hamilton-Jacobi-Bellman-Isaacs (HJBI) equation).

Writing $u_j^n$ for the quantity $u(n \Delta t, x_j)$, let $u^n \coloneqq(u_j^n)_{j = -M}^M$ be the image of $u(n \Delta t, \cdot)$ on the spatial grid.
Note that $u^n$ is a vector in $\mathbb{R}^{2M + 1}$.
In order to present the penalty scheme, our first task is to define approximations $\mathcal{D}$ and $\mathcal{D}^2$ of the first and second derivatives $u_x$ and $u_{xx}$ in the sense that
\[
	u_x(n \Delta t, x_j) \approx (\mathcal{D} u^n)_j
	\mathand
	u_{xx}(n \Delta t, x_j) \approx (\mathcal{D}^2 u^n)_j.
\]
For the second derivative, we use a standard three point stencil:
\begin{equation}
	(\mathcal{D}^2 u^n)_j
	\coloneqq \frac{ u_{j - 1}^n - 2 u_j^n + u_{j + 1}^n }{ (\Delta x)^2 }.
	\label{eq:stencil_2}
\end{equation}
For the first derivative, we use an upwind stencil:
\begin{equation}
	(\mathcal{D} u^n)_j
	\coloneqq
	\frac{1}{\Delta x}
	\begin{cases}
		u_{j + 1}^n - u_j^n
		& \text{if } \mu(x_j, b) \geq 0
		\\
		u_j^n - u_{j - 1}^n
		& \text{if } \mu(x_j, b) < 0.
	\end{cases}
	\label{eq:stencil_1}
\end{equation}
These stencils should be modified accordingly in the case of a non-uniform spatial grid.
Since the expressions \cref{eq:stencil_2} and \cref{eq:stencil_1} are not well-defined at $j = \pm M$, we set $\mathcal{D}$ and $\mathcal{D}^2$ to zero there for convenience:
\[
	(\mathcal{D} u^n)_{\pm M} \coloneqq 0
	\mathand
	(\mathcal{D}^2 u^n)_{\pm M} \coloneqq 0.
\]
Noting that the upwind direction in \cref{eq:stencil_1} depends on the choice of $b$, we will sometimes write $\mathcal{D}_b$ in lieu of $\mathcal{D}$ to make this dependence explicit.
\begin{rem}
	\label{rem:truncated_domain}
	Assumption \cref{eq:infinite_grid} is computationally expensive to implement in practice and as such, a practical implementation should instead take $Q$ to be constant independent of $\rho$.
	In this case, we can interpret
	\[
		u_x(n \Delta t, \pm Q) \approx (\mathcal{D} u^n)_{\pm M} = 0
		\mathand
		u_{xx}(n \Delta t, \pm Q) \approx (\mathcal{D}^2 u^n)_{\pm M} = 0
	\]
	as imposing the Neumann boundary condition $u_x(t, \pm Q) = u_{xx}(t, \pm Q) = 0$
	(other boundary conditions can be imposed by modifying the scheme appropriately).
	In the context of the optimal control problem \cref{eq:control_problem}, this corresponds to (formally) modifying the drift $\mu$ and diffusion $\sigma$ of $X^{t, x, \gamma}$ by setting them to zero outside of $(-Q, Q)$.
\end{rem}

Since the control set $B$ appearing in \cref{eq:hjbqvi} may be infinite, we replace it with a nonempty finite subset $B^\rho \subset B$.
Similarly, since the set $Z(t, x)$ appearing in the intervention operator \cref{eq:intervention} may be infinite at each point $(t, x)$, we replace it with a nonempty finite subset $Z^\rho(t, x) \subset Z(t, x)$.
To ensure consistency, we require:
\begin{enumerate}[label=(H\arabic*),start=4]
	\item
		\label{enu:comparison_end}
		As $\rho \downarrow 0$, $B^\rho$ converges to $B$ (in the Hausdorff metric) and $Z^\rho$ converges locally uniformly to $Z$ (in the Hausdorff metric).
		Moreover, $(t, x) \mapsto Z^\rho(t, x)$ is continuous (in the Hausdorff metric) for each $\rho$.
\end{enumerate}
Unless otherwise mentioned, we will always assume the Hausdorff metric when discussing compact sets.
Note that the above implies that $Z$ is also continuous.

Next, note that the point $x + \Gamma(t, x, z)$ appearing in the intervention operator \cref{eq:intervention} is not necessarily a point on the grid.
Therefore, a discretization of the intervention operator requires interpolation.
We use $\interp(u^n, x)$ to denote the value of $u(n \Delta t, x)$ as approximated by a standard monotone linear interpolant.
Precisely, for a point $x$ within the grid (i.e., $x_{-M} < x < x_M$), we define
\begin{equation}
	\interp(u^n,x) \coloneqq \alpha u_{k + 1}^n + \left(1 - \alpha \right) u_k^n
	\label{eq:interpolation}
\end{equation}
where $k$ is the unique integer satisfying $x_k \leq x < x_{k + 1}$ and $\alpha = (x - x_k) / (x_{k+1} - x)$.
If $x$ is not within the grid (i.e., $x \leq x_{-M}$ or $x \geq x_M$), we define
\[
	\interp(u^n,x) \coloneqq \begin{cases}
		u_{-M}^n & \text{if } x \leq x_{-M} \\
		u_M^n & \text{if } x \geq x_M.
	\end{cases}
\]
We can now discretize the intervention operator according to
\[
	(\mathcal{M}^\rho u^n)_j
	\coloneqq \max_{z \in Z^\rho(n \Delta t, x_j)} \left \{
		\interp(u^n, x_j + \Gamma(n \Delta t, x_j, z))
		+ K(n \Delta t, x_j, z)
	\right \}.
\]
Using the notation above, $\mathcal{M} u(n \Delta t, x_j) \approx (\mathcal{M}^\rho u^n)_j$.

%(cf. \cite{altarovici2016optimal})
In \cite{MR673169}, the authors show that a solution of the HJBQVI \cref{eq:hjbqvi} can be constructed as the limit of $u^\epsilon$ as $\epsilon \downarrow 0$ where $u^\epsilon$ solves the so-called ``penalized'' problem
\begin{equation}
	\begin{cases}
		\displaystyle{
			- \sup_{b \in B} \left \{
				\frac{\partial u}{\partial t}
				+ L_b u
				+ f(\cdot, b)
			\right \}
			- \left( \frac{\mathcal{M} u - u}{\epsilon} \right)^+
			= 0,
		}
		& \text{on } [0, T) \times \mathbb{R}^d
		\\
		u(T, \cdot) = g,
		& \text{on } \mathbb{R}^d
	\end{cases}
	\label{eq:penalized}
\end{equation}
where $(a)^+ = \max (a, 0)$.
The penalty scheme is just a discretization of \cref{eq:penalized}:
\begin{equation}
	\begin{cases}
		\displaystyle{
			- \max_{b \in B^\rho} \left \{
				\frac{u_j^{n + 1} - u_j^n}{\Delta t}
				+ (L_b^\rho u^n)_j
				+ f_j^n(b)
			\right \}
			- \left( \frac{ (\mathcal{M}^\rho u^n)_j - u_j^n }{\epsilon} \right)^+
			= 0,
		}
		& \text{for } n < N
		\\
		u_j^N = g(x_j)
	\end{cases}
	\label{eq:penalty}
\end{equation}
where (recall that $d = 1$)
\[
	(L_b^\rho u^n)_j
	\coloneqq \mu_j(b) (\mathcal{D}_b u^n)_j
	+ \frac{1}{2} \sigma_j(b)^2 (\mathcal{D}^2 u^n)_j
\]
and we have introduced the shorthand $f_j^n(b) \coloneqq f(n \Delta t, x_j, b)$, $\mu_j(b) \coloneqq \mu(x_j, b)$, and $\sigma_j(b) \coloneqq \sigma(x_j, b)$.
To ensure consistency, we will always assume
\[
	\epsilon \downarrow 0
	\mathas
	\rho \downarrow 0.
\]

\begin{rem}
	\label{rem:iterative_method}
	\cref{eq:penalty} defines $(2M+1)(N+1)$ ``discrete'' equations.
In \cite[Section 5.2]{MR3493959}, it is proved that these equations admit a unique solution.
	Morever, the solution at the $n$-th timestep $u^n$ can be computed by policy iteration or by value iteration.
	This is the motivation for discretizing \cref{eq:penalized} instead of the HJBQVI \cref{eq:hjbqvi} directly since in the latter case, the resulting discrete equations may not admit any solutions, and even when they do, their computation may be nontrivial \cite[Section 5.1]{MR3493959}.
	%This is discussed in great detail in  (cf. \cite{MR2302036,MR3373472}).
\end{rem}

Note that the unique solution of the discrete equations \cref{eq:penalty} is defined only at grid points $(n \Delta t, x_j)$.
Following \cite[Pg. 281]{MR1115933}, we extend it to all points in $[0, T] \times \mathbb{R}$ by
\begin{equation}
	u^\rho(t, x)
	\coloneqq \sum_{\substack{
		0 \leq n \leq N \\
		-M \leq j \leq M
	}} u_j^n \cdot \boldsymbol{1}_{E(n,j)}(t, x)
	\label{eq:piecewise_constant}
\end{equation}
where $E(n, j) \coloneqq [(n - \nicefrac{1}{2}) \Delta t, (n + \nicefrac{1}{2}) \Delta t) \times [(j - \nicefrac{1}{2}) \Delta x, (j + \nicefrac{1}{2}) \Delta x)$ and $\boldsymbol{1}_G$ is the indicator function of the set $G$.
Note that the extension $u^\rho$ is a (discontinuous) simple function (recall \cref{rem:discontinuous_solutions}).
We can now state the convergence result:

\begin{theorem}
	\label{thm:penalty_convergence}
	Suppose \ref{enu:vanilla_start}\textendash \ref{enu:comparison_end} and that $\mu$ and $\sigma$ are Lipschitz in $x$ uniformly in $b$ so that the HJBQVI \cref{eq:hjbqvi} satisfies a \cref{eq:nonlocal_definition}-comparison principle.
	Suppose also that \cref{eq:infinite_grid} holds.
	Then, as $\rho \downarrow 0$, the solution $u^\rho$ of the penalty scheme converges locally uniformly to the unique bounded solution of the HJBQVI.
\end{theorem}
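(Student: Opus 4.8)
The plan is to deduce \cref{thm:penalty_convergence} from \cref{thm:convergence_result}, so the work splits into (i) recasting the penalty scheme \cref{eq:penalty} in the abstract form \cref{eq:scheme} and (ii) checking that the resulting $S$ is monotone, stable, and nonlocally consistent, with a limiting equation that enjoys a \cref{eq:nonlocal_definition}-comparison principle. First I would fold the time variable into the spatial one so that the domain is $\overline{\Omega} = [0,T]\times\mathbb{R}$, and define $S$ at a point $(t,x)$ to be the discrete equation of \cref{eq:penalty} attached to the node whose cell $E(n,j)$ contains $(t,x)$ — the interior equation when $n<N$, the terminal equation $r = g(x_j)$ when $n=N$ — with $\mathcal{N}^\rho u^\rho$ the piecewise-constant extension of $j\mapsto(\mathcal{M}^\rho u^n)_j$. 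Thanks to \cref{eq:infinite_grid} the artificial boundary can be ignored. The margin assumptions \cref{eq:margin_assumption_1}--\cref{eq:margin_assumption_2} hold with $h_1(\rho)$ chosen to decay faster than $\epsilon(\Delta x)^2$, so that the resulting $h_2(\rho,\cdot)\approx h_1(\rho)\bigl(1/\Delta t + \const/(\Delta x)^2 + 1/\epsilon\bigr)$ tends to $0$; this is the bookkeeping that licenses the discontinuous extension \cref{eq:piecewise_constant} (\cref{rem:discontinuous_solutions}).

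Monotonicity is where the discretizations earn their keep: the forward time difference, the upwind stencil \cref{eq:stencil_1} for the drift, and the central stencil \cref{eq:stencil_2} for the (nonnegative) diffusion all make the off-node and next-timestep entries of the function argument enter $S$ through $-\max_{b\in B^\rho}\{\cdots\}$ with the correct sign, and the penalty term only reinforces this. Stability is existence plus a uniform bound: for each fixed $\rho$ the discrete system \cref{eq:penalty} has a unique solution, computable by policy or value iteration, by \cite[Section 5.2]{MR3493959} (\cref{rem:iterative_method}); and the spatially constant functions $v^n_j = \pm(\Vert g\Vert_\infty + (T-n\Delta t)\Vert f\Vert_\infty)$ are respectively a super- and a sub-solution of \cref{eq:penalty} — the point being that for a spatially constant candidate $\interp(v^n,\cdot)=v^n_j$, so $(\mathcal{M}^\rho v^n)_j - v^n_j \leq \sup_{t,x,z}K < 0$ by \ref{enu:vanilla_end}, which annihilates the penalty term — whence a discrete comparison principle for the monotone scheme gives $\Vert u^\rho\Vert_\infty \leq \Vert g\Vert_\infty + T\Vert f\Vert_\infty$ uniformly in $\rho$.

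The heart of the proof, and the step I expect to be the main obstacle, is nonlocal consistency. By \cref{rem:wlog} I may take $w^\rho = u^\rho$ a scheme solution. On the test function $\varphi+\xi$, the classical stencil truncation estimates together with continuity of $\mu,\sigma,f$, compactness of $B$, and $B^\rho\to B$ (\ref{enu:comparison_end}) give $\max_{b\in B^\rho}\{\cdots\}\to\sup_{b\in B}\{\partial\varphi/\partial t + L_b\varphi + f(\cdot,b)\}$; the delicate term is the penalty $-(\epsilon^{-1}(\mathcal{N}^\rho u^\rho - \varphi - \xi))^+$, in which $\epsilon\downarrow 0$. Here \cref{lem:intervention_limits} supplies $\liminf \mathcal{N}^\rho w^\rho(y)\geq \mathcal{M}\underline{w}(x)$ and $\limsup \mathcal{N}^\rho w^\rho(y)\leq \mathcal{M}\overline{w}(x)$ under \ref{enu:vanilla_start}--\ref{enu:comparison_end} (using monotonicity of linear interpolation and continuity of $Z^\rho\to Z$, $\Gamma$, $K$); combined with the fact that, for a scheme solution, the penalty identity itself forces $(\mathcal{M}^\rho u^n)_j - u^n_j$ to be small, a case analysis on the sign of $\mathcal{M}\overline{w}(x)-\varphi(x)$ (resp.\ $\mathcal{M}\underline{w}(x)-\varphi(x)$) shows that in the limit the penalty either vanishes, leaving $-\sup_b\{\cdots\}$, or blows down to $-\infty$; this yields precisely \cref{eq:superconsistency}--\cref{eq:subconsistency} with limiting equation obtained from \cref{eq:penalized} by the penalization limit $\epsilon\downarrow 0$. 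The terminal surface $t=T$, where \cref{eq:penalty} reads $u^N_j = g(x_j)$ while the HJBQVI carries the obstacle $\max(g,\mathcal{M}u(T,\cdot))$, is reconciled using $\mathcal{M}g\leq g$ from \ref{enu:vanilla_start}.

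Finally, I would record the elementary fact underlying the penalization method of \cite{MR673169}: the operator produced by the $\epsilon\downarrow 0$ limit and the HJBQVI operator \cref{eq:hjbqvi_operator} have exactly the same viscosity sub- and supersolutions, so the former inherits the \cref{eq:nonlocal_definition}-comparison principle assumed for the HJBQVI (which holds under the stated Lipschitz hypotheses on $\mu,\sigma$ via \cref{app:comparison_principle}), and its unique solution is the unique bounded solution of the HJBQVI. \Cref{thm:convergence_result} then delivers the asserted locally uniform convergence. The only genuinely hard part is the penalty-term analysis in the third paragraph — balancing the $\epsilon^{-1}$ penalization against the half-relaxed limits of the discretized intervention operator — which is exactly why the $\mathcal{M}\underline{w}$/$\mathcal{M}\overline{w}$ form of nonlocal consistency (and \cref{lem:intervention_limits}) is needed instead of the naive \cref{eq:local_consistency}.
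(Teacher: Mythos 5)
Your overall plan — cast the penalty scheme in the abstract form \cref{eq:scheme}, verify monotonicity, stability, and nonlocal consistency using \cref{lem:commute}, \cref{lem:cluster_point}, \cref{lem:intervention_limits}, then invoke \cref{thm:convergence_result} with the comparison principle of \cref{app:comparison_principle} — is the one the paper takes, and your monotonicity and stability sketches are essentially right (the paper proves the same $\Vert g\Vert_\infty + T\Vert f\Vert_\infty$ bound via a direct discrete max-principle rather than by exhibiting constant super/subsolutions, but that is a cosmetic difference). However, there is a genuine gap in your treatment of nonlocal consistency, which you yourself flag as "the heart of the proof."

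You propose to keep $S$ in the raw penalty form
\[
S(\rho,\cdot,\varphi+\xi,\cdot,\mathcal{N}^\rho w^\rho)
= -\max_{b\in B^\rho}\{\cdots\} - \left(\frac{\mathcal{N}^\rho w^\rho - (\varphi+\xi)}{\epsilon}\right)^+
\]
and to argue through a case analysis on the signs of $\mathcal{M}\overline{w}(x)-\varphi(x)$ and $\mathcal{M}\underline{w}(x)-\varphi(x)$ that the penalty term either vanishes or tends to $-\infty$. This does not establish \cref{eq:consistency}. For the liminf inequality \cref{eq:subconsistency}, the right-hand side $F_*(\ldots,\mathcal{M}\overline{w}(x))=\min(-\sup_b\{\cdots\},\varphi(x)-\mathcal{M}\overline{w}(x))$ is a \emph{finite} number, yet whenever $\mathcal{M}\overline{w}(x)>\varphi(x)$ the penalty term can be driven to $-\infty$ (since \cref{lem:intervention_limits} allows $\mathcal{N}^\rho w^\rho$ to stay bounded above $\varphi$ along a subsequence while $\epsilon\downarrow 0$), giving $\liminf S=-\infty\not\geq F_*$. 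Symmetrically, for the limsup inequality \cref{eq:superconsistency}, when $\mathcal{M}\underline{w}(x)<\varphi(x)<\mathcal{M}\overline{w}(x)$ the penalty term can vanish along a subsequence, forcing $\limsup S=-\sup_b\{\cdots\}$, which need not be $\leq\varphi(x)-\mathcal{M}\underline{w}(x)$. Your suggestion that "the penalty identity itself forces $(\mathcal{M}^\rho u^n)_j-u^n_j$ to be small" does not help, because the quantity appearing in $S$ is $\mathcal{N}^\rho w^\rho-\varphi-\xi$, not $\mathcal{M}^\rho u^n-u^n$, and these differ off the contact point. Finally, the claim that the "limiting equation obtained from \cref{eq:penalized} by the penalization limit" has the same viscosity solutions as \cref{eq:hjbqvi_operator} papers over the fact that \cref{thm:convergence_result} requires consistency with respect to a \emph{fixed} operator $F$ — there is no room to change $F$ after the fact.

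The missing ingredient is the algebraic rewriting of \cref{eq:penalty} used in the paper. The discrete equation $-\mathcal{L}^\rho u - \epsilon^{-1}(\mathcal{M}^\rho u - u)^+ = 0$ is \emph{equivalent}, pointwise, to
\[
\min\bigl(-(\mathcal{L}^\rho u^n)_j,\ u_j^n - (\mathcal{M}^\rho u^n)_j - \epsilon(\mathcal{L}^\rho u^n)_j\bigr) = 0,
\]
and one must take this $\min$ form as the definition of $S$ in \cref{eq:penalty_scheme_1}. The two expressions have the same zero set (so either defines the same scheme solution), but as \emph{functions} of the argument they differ; only the $\min$ form is nonlocally consistent, because after plugging in $\varphi+\xi$ and sending $\rho,\epsilon\downarrow 0$ the $\epsilon\mathcal{L}^\rho$ correction vanishes and \cref{lem:commute} cleanly distributes the limit through the $\min$, landing directly on $F_*$ and $F^*$ of \cref{eq:hjbqvi_operator} via \cref{lem:cluster_point} and \cref{lem:intervention_limits}. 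Your terminal-condition remark (using $\mathcal{M}g\leq g$ and \cref{rem:wlog} to reconcile $u^N=g$ with $\max(g,\mathcal{M}u(T,\cdot))$) is on the right track and matches the paper's $t=T$ case once the $\min$ form is in place.
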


For the reader's convenience, the comparison principle alluded to above is stated in \cref{app:comparison_principle}.
\ifx\myarticle\undefined%%% SIAM %%%
We will spend the remainder of this subsection proving the above by appealing to \cref{thm:convergence_result} and establishing the nonlocal consistency of the penalty scheme (the stability and monotonicity arguments, which appear in the preprint version \href{https://arxiv.org/abs/1705.02922}{arXiv:1705.02922}, are standard and hence omitted).
\else%%% ARTICLE %%%
We will spend the remainder of this subsection proving the above by appealing to \cref{thm:convergence_result} and establishing the monotonicity, stability, and nonlocal consistency of the penalty scheme.
\fi

First, note that the discrete equations \cref{eq:penalty} are equivalent to, by some algebra,
\[
	\begin{cases}
		\min \left (
			- (\mathcal{L}^\rho u^n)_j,
			u_j^n - (\mathcal{M}^\rho u^n)_j
			- \epsilon (\mathcal{L}^\rho u^n)_j
		\right ) = 0,
		& \text{for } n < N
		\\
		u_j^N = g(x_j)
	\end{cases}
\]
where
\[
	(\mathcal{L}^\rho u^n)_j
	\coloneqq
	\max_{b \in B^\rho} \left \{
		\frac{u_j^{n + 1} - u_j^n}{\Delta t}
		+ (L_b^\rho u^n)_j
		+ f_j^n(b)
	\right \}.
\]
Next, we write the penalty scheme in the form \cref{eq:scheme} by taking
\begin{equation}
	S( \rho, (n \Delta t, x_j), u_j^n, [u]_j^n, \ell)
	\coloneqq \begin{cases}
		\min \left (
			- (\mathcal{L}^\rho u^n)_j,
			u_j^n - \ell
			- \epsilon (\mathcal{L}^\rho u^n)_j
		\right ),
		& \text{if } n < N
		\\
		u_j^n - g(x_j),
		& \text{if } n = N
	\end{cases}
	\label{eq:penalty_scheme_1}
\end{equation}
where $[u]_j^n \coloneqq [u]_{(n \Delta t, x_j)}$ and
\begin{equation}
	\mathcal{N}^\rho u(n \Delta t, x_j)
	\coloneqq (\mathcal{M}^\rho u^n)_j.
	\label{eq:penalty_scheme_2}
\end{equation}
While the above only define $S$ and $\mathcal{N}^\rho$ at grid points $(n \Delta t, x_j)$, it is understood that they are extended to all points in $[0, T] \times \mathbb{R}$ via the piecewise requirement \cref{eq:piecewise_constant}.

\ifx\myarticle\undefined%%% SIAM %%%
\else%%% ARTICLE %%%

\subsubsection{Monotonicity}
\label{subsubsec:penalty_monotonicity}

Due to our choice of upwind discretization, the penalty scheme is monotone by construction. We establish this below.

\begin{proof}[Monotonicity proof]
	Let $(n \Delta t, x_j)$ be an arbitrary grid point and let $u$ and $w$ be functions in $B([0,T] \times \mathbb{R})$ satisfying $u \geq w$ and $u_j^n = w_j^n$ where we have, as usual, employed the shorthand $u_j^n = u(n \Delta t, x_j)$ and $w_j^n = w(n \Delta t, x_j)$.
	If $n = N$, then
	\[
		S(\rho, (n \Delta t, x_j), u_j^n, [u]_j^n, \ell) - S(\rho, (n \Delta t, x_j), w_j^n, [w]_j^n, \ell)
		= \left( u_j^n - g(x_j) \right) - \left( w_j^n - g(x_j) \right) = 0.
	\]
	Otherwise, defining $v \coloneqq w - u$, we have
	\begin{equation}
		S(\rho, (n \Delta t, x_j), u_j^n, [u]_j^n, \ell)
		- S(\rho, (n \Delta t, x_j), w_j^n, [w]_j^n, \ell)
		\leq \max_{b \in B^\rho} \left \{
			\frac{v_j^{n + 1}}{\Delta t}
			+ (L_b^\rho v^n)_j
		\right \}
		\leq 0
		\label{eq:penalty_monotonicity}
	\end{equation}
	since the terms $v_j^{n + 1}$ and $(L_b^\rho v^n)_j$ are nonpositive.
\end{proof}

\subsubsection{Stability}
\label{subsubsec:penalty_stability}

We will prove that a solution of the penalty scheme is bounded by $\Vert g \Vert_\infty + \Vert f \Vert_\infty T$.
In the context of the optimal control problem \cref{eq:control_problem}, this bound has the natural interpretation of being the sum of the maximum possible continuous reward and the maximum possible reward obtained at the final time:
\[
	g(X_T^{t, x, \gamma}) + \int_t^T f(s, X_s^{t, x, \gamma}, b_s) ds
	\leq \Vert g \Vert_\infty + \Vert f \Vert_\infty T.
\]

\begin{proof}[Stability proof]
	Let $u$ be a solution of the penalty scheme.
	Fix a particular timestep $n < N$ and let $j-$ be such that $u_{j-}^n = \min_j u_j^n$.
	It is easily verified that $(L_b^\rho u^n)_{j-} \geq 0$.
	Therefore,
	\begin{multline}
		0
		= - \max_{b \in B^\rho} \left \{
			\frac{u_{j-}^{n + 1} - u_{j-}^n}{\Delta t}
			+ (L_b^\rho u^n)_{j-}
			+ f_{j-}^n(b)
		\right \}
		- \left( \frac{ (\mathcal{M}^\rho u^n)_{j-} - u_{j-}^n }{\epsilon} \right)^+
		\\
		\leq -\frac{u_{j-}^{n + 1} - u_{j-}^n}{\Delta t} + \Vert f \Vert_\infty.
		\label{eq:penalty_lower}
	\end{multline}
	Now, let $j+$ be such that $u_{j+}^n = \max_j u_j^n$.
	It is easily verified that $(L_b^\rho u^n)_{j+} \leq 0$ and $(\mathcal{M}^\rho u^n)_{j+} \leq u_{j+}^n$ (since $K \leq 0$ by \ref{enu:vanilla_end}).
	Therefore,
	\begin{multline}
		0
		= - \max_{b \in B^\rho} \left \{
			\frac{u_{j+}^{n + 1} - u_{j+}^n}{\Delta t}
			+ (L_b^\rho u^n)_{j+}
			+ f_{j+}^n(b)
		\right \}
		- \left( \frac{ (\mathcal{M}^\rho u^n)_{j+} - u_{j+}^n }{\epsilon} \right)^+
		\\
		\geq -\frac{u_{j+}^{n + 1} - u_{j+}^n}{\Delta t} - \Vert f \Vert_\infty.
		\label{eq:penalty_upper}
	\end{multline}
	By \cref{eq:penalty_lower} and \cref{eq:penalty_upper},
	\begin{multline*}
		\min_j u_j^{n + 1} - \Vert f \Vert_\infty \Delta t
		\leq u_{j-}^{n + 1} - \Vert f \Vert_\infty \Delta t
		\leq u_{j-}^n \\
		\leq u_{j+}^n
		\leq u_{j+}^{n + 1} + \Vert f \Vert_\infty \Delta t
		\leq \max_j u_j^{n + 1} + \Vert f \Vert_\infty \Delta t
	\end{multline*}
	and hence $\Vert u^n \Vert_\infty \leq \Vert u^{n + 1} \Vert_\infty + \Vert f \Vert_\infty \Delta t$.
	By induction on $n$,
	\[
		\Vert u^n \Vert_\infty
		\leq \Vert u^N \Vert_\infty + \Vert f \Vert_\infty \left( T - n \Delta t \right),
	\]
	and the desired result follows since $\Vert u^N \Vert_\infty \leq \Vert g \Vert_\infty$.
\end{proof}

\subsubsection{Nonlocal consistency}

\fi

To establish nonlocal consistency, we require three lemmas.
\ifx\myarticle\undefined%%% SIAM %%%
The proofs of the first two, which appear in the preprint version \href{https://arxiv.org/abs/1705.02922}{arXiv:1705.02922}, are standard and hence omitted.
\else%%% ARTICLE %%%
\fi
The first lemma is a standard result from analysis:

\begin{lemma}
	\label{lem:commute}
	Let $(a_n)_n$, $(b_n)_n$, and $(c_n)_n$ be real sequences with $c_n \geq \min ( a_n, b_n )$ (resp. $c_n \leq \min ( a_n, b_n )$) for each $n$.
	Then,
	\begin{align*}
		\liminf_{n \rightarrow \infty} c_n
		& \geq \min (
			\liminf_{n \rightarrow \infty} a_n,
			\liminf_{n \rightarrow \infty} b_n
		) \\
		\text{(resp. } \limsup_{n \rightarrow \infty} c_n
		& \leq \min (
			\limsup_{n \rightarrow \infty} a_n,
			\limsup_{n \rightarrow \infty} b_n
		)
		\text{)}.
	\end{align*}
\end{lemma}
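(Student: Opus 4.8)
The plan is to treat the two statements separately, since they are of genuinely different difficulty. For the $\limsup$ (resp.) half, the argument is immediate: from $c_n \le \min(a_n, b_n)$ we get $c_n \le a_n$ and $c_n \le b_n$ for every $n$, so monotonicity of $\limsup$ yields $\limsup_n c_n \le \limsup_n a_n$ and $\limsup_n c_n \le \limsup_n b_n$, and taking the minimum of the two right-hand sides gives the claim. No further work is needed here.

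For the $\liminf$ half, I would first apply monotonicity of $\liminf$ to the hypothesis $c_n \ge \min(a_n, b_n)$, reducing the problem to the inequality $\liminf_n \min(a_n, b_n) \ge \min(\liminf_n a_n, \liminf_n b_n)$. To prove this, set $\alpha_n \coloneqq \inf_{k \ge n} a_k$ and $\beta_n \coloneqq \inf_{k \ge n} b_k$; these are nondecreasing in $n$ with limits $\liminf_n a_n$ and $\liminf_n b_n$. For each $k \ge n$ we have $a_k \ge \alpha_n$ and $b_k \ge \beta_n$, hence $\min(a_k, b_k) \ge \min(\alpha_n, \beta_n)$; taking the infimum over $k \ge n$ gives $\inf_{k \ge n} \min(a_k, b_k) \ge \min(\alpha_n, \beta_n)$. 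Letting $n \to \infty$ and using continuity of $(s, t) \mapsto \min(s, t)$ (on the extended reals) produces the desired inequality.

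The only point requiring a little care — and the closest thing to an obstacle — is the bookkeeping when some of the $\liminf$'s or $\limsup$'s equal $\pm\infty$; with the standard conventions on arithmetic and order in the extended reals, every inequality above is still valid, so no genuine difficulty arises. I would state both halves explicitly rather than obtaining one from the other via $c_n \mapsto -c_n$, since the two are invoked in the sequel in distinct roles (one in the subconsistency estimate, one in the superconsistency estimate), and keeping them separate makes those later applications transparent.
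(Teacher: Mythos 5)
Your proof is correct and follows essentially the same route as the paper's: the $\liminf$ half hinges on passing to tail infima and invoking the continuity of $(s,t)\mapsto\min(s,t)$ on the extended reals, which is exactly the paper's key manipulation (the paper records it as the identity $\inf_{m\geq n}\min(a_m,b_m)=\min(\inf_{m\geq n}a_m,\inf_{m\geq n}b_m)$, of which you establish only the $\geq$ direction that is actually needed). Your $\limsup$ half, splitting $c_n\leq a_n$ and $c_n\leq b_n$ and taking $\limsup$ in each, is a marginally more direct phrasing of the paper's ``handled similarly'' but not a different idea.
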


\ifx\myarticle\undefined%%% SIAM %%%
\else%%% ARTICLE %%%

\begin{proof}
	If $c_n \geq \min ( a_n, b_n )$ for each $n$, then
	\[
		\inf_{m \geq n} c_m
		\geq \inf_{m \geq n} \min ( a_m, b_m )
		= \min ( \inf_{m \geq n} a_m, \inf_{m \geq n} b_m ).
	\]
	Taking limits in the above and using the continuity of $(x, y) \mapsto \min ( x,y )$ establishes the result.
	The case of $c_n \leq \min ( a_n, b_n )$ is handled similarly.
\end{proof}

\fi

The second lemma justifies approximating the term $\sup_{b \in B} \{ \cdot \}$ in the HJBQVI \cref{eq:hjbqvi} by $\max_{b \in B^\rho} \{ \cdot \}$.
Recall that, as per \ref{enu:comparison_end}, $B^\rho$ converges to $B$ in the Hausdorff metric.

\begin{lemma}
	\label{lem:cluster_point}
	Let $Y$ be a compact metric space and $\zeta : Y \times B \rightarrow \mathbb{R}$ be a continuous function.
	Let $(\rho_m)_m$ be a sequence of positive numbers converging to zero and $(y_m)_m$ be a sequence taking values in $Y$ and converging to some $\hat{y}$ in $Y$.
	Then,
	\[
		\max_{b \in B^{\rho_m}}
		\zeta(y_m, b) \rightarrow \max_{b \in B} \zeta(\hat{y}, b).
	\]
\end{lemma}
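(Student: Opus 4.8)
The plan is to establish the two one-sided bounds
\[
	\limsup_{m \rightarrow \infty} \max_{b \in B^{\rho_m}} \zeta(y_m, b) \leq \max_{b \in B} \zeta(\hat{y}, b)
	\mathand
	\liminf_{m \rightarrow \infty} \max_{b \in B^{\rho_m}} \zeta(y_m, b) \geq \max_{b \in B} \zeta(\hat{y}, b),
\]
which together give the claimed convergence. Write $M \coloneqq \max_{b \in B} \zeta(\hat{y}, b)$ (the maximum is attained since $B$ is compact and $\zeta(\hat{y}, \cdot)$ is continuous) and fix $\hat{b} \in B$ with $\zeta(\hat{y}, \hat{b}) = M$. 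The key tool is that $Y \times B$ is compact, being a product of compact metric spaces, so $\zeta$ is uniformly continuous on it; fix $\epsilon > 0$ and choose $\delta > 0$ such that $|\zeta(y, b) - \zeta(y', b')| < \epsilon$ whenever $d_Y(y, y') < \delta$ and $d_B(b, b') < \delta$.

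For the upper bound, take $m$ large enough that $d_Y(y_m, \hat{y}) < \delta$. Then uniform continuity gives $\zeta(y_m, b) < \zeta(\hat{y}, b) + \epsilon \leq M + \epsilon$ for every $b \in B$, and in particular for every $b \in B^{\rho_m} \subset B$; taking the maximum over the (nonempty, finite) set $B^{\rho_m}$ yields $\max_{b \in B^{\rho_m}} \zeta(y_m, b) \leq M + \epsilon$. Hence $\limsup_m \max_{b \in B^{\rho_m}} \zeta(y_m, b) \leq M + \epsilon$, and letting $\epsilon \downarrow 0$ gives the upper bound.

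For the lower bound, I use that $B^\rho$ converges to $B$ in the Hausdorff metric by \ref{enu:comparison_end}, so for $m$ large the one-sided Hausdorff distance $\sup_{b \in B} \operatorname{dist}(b, B^{\rho_m})$ is below $\delta$; in particular there is $b_m \in B^{\rho_m}$ with $d_B(b_m, \hat{b}) < \delta$. Taking $m$ also large enough that $d_Y(y_m, \hat{y}) < \delta$, uniform continuity gives $\zeta(y_m, b_m) > \zeta(\hat{y}, \hat{b}) - \epsilon = M - \epsilon$, whence $\max_{b \in B^{\rho_m}} \zeta(y_m, b) \geq \zeta(y_m, b_m) > M - \epsilon$. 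Therefore $\liminf_m \max_{b \in B^{\rho_m}} \zeta(y_m, b) \geq M - \epsilon$, and letting $\epsilon \downarrow 0$ finishes the argument. I do not expect a genuine obstacle here — the whole proof is a routine uniform-continuity-plus-Hausdorff-convergence estimate; the only point needing a moment's care is choosing $m$ large enough to satisfy simultaneously $d_Y(y_m, \hat{y}) < \delta$ and the Hausdorff closeness of $B^{\rho_m}$ to $B$, which is fine since each condition holds for all sufficiently large $m$.
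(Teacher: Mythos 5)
Your proof is correct, and it uses the same two ingredients as the paper's: uniform continuity of $\zeta$ on the compact set $Y \times B$, and the Hausdorff convergence $B^{\rho_m} \to B$ from \ref{enu:comparison_end}. The only cosmetic difference is that the paper first passes to an arbitrary convergent subsequence of the bounded sequence $\zeta_m \coloneqq \max_{b \in B^{\rho_m}} \zeta(y_m, b)$ and shows its limit equals $\max_{b\in B}\zeta(\hat y,b)$, appealing to continuity of $y \mapsto \max_{b\in B}\zeta(y,b)$ for the upper bound; you instead bound $\limsup$ and $\liminf$ directly via an $\epsilon$-$\delta$ uniform-continuity estimate, folding that continuity argument in rather than stating it separately. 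Both are equally rigorous and equally elementary.
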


\ifx\myarticle\undefined%%% SIAM %%%
\else%%% ARTICLE %%%

\begin{proof}
	Let $\zeta_m \coloneqq \max_{b \in B^{\rho_m}} \zeta(y_m, b)$.
	Since the sequence $(\zeta_m)_m$ is bounded by $\Vert \zeta \Vert_{\infty} < \infty$ ($Y\times B$ is compact and $\zeta$ is continuous), it is sufficient to show that every convergent subsequence of $(\zeta_m)_m$ converges to $\max_{b \in B} \zeta(\hat{y}, b)$.

	Therefore, consider an arbitrary convergent subsequence of $(\zeta_m)_m$, and relabel it, with a slight abuse of notation, $(\zeta_m)_m$.
	Choose $\hat{b}$ such that $\zeta(\hat{y}, \hat{b}) = \max_{b \in B} \zeta(\hat{y}, b)$.
	Since $B^{\rho_m} \rightarrow B$, we can find a sequence $(b_m)_m$ such that $b_m \in B^{\rho_m}$ for each $m$ and $b_m \rightarrow \hat{b} \in B$.
	Therefore,
	\[
		\lim_{m \rightarrow \infty} \zeta_m
		= \lim_{m \rightarrow \infty} \max_{b \in B^{\rho_m}} \zeta(y_m, b)
		\geq \lim_{m \rightarrow \infty} \zeta(y_m, b_m)
		= \max_{b \in B} \zeta(\hat{y}, b).
	\]
	Furthermore, note that
	\[
		\lim_{m \rightarrow \infty} \zeta_m
		= \lim_{m \rightarrow \infty} \max_{b \in B^{\rho_m}} \zeta(y_m, b)
		\leq \lim_{m \rightarrow \infty} \max_{b \in B} \zeta(y_m, b)
		= \max_{b \in B} \zeta(\hat{y}, b)
	\]
	where we have obtained the last inequality by the continuity of $y \mapsto \max_{b \in B} \zeta(y, b)$.

	To see that $y \mapsto \max_{b \in B} \zeta(y, b)$ is continuous, let $y_0 \in Y$ and $\epsilon > 0$ be arbitrary.
	Since $Y \times B$ is compact, $\zeta$ is uniformly continuous.
	Therefore, we can pick $\delta > 0$ such that for all $b \in B$ and $y_1 \in Y$ with $|y_0 - y_1| < \delta$, $| \zeta(y_0, b) - \zeta(y_1, b) | < \epsilon$ and hence
	\[
		\left| \max_{b \in B} \zeta(y_0, b) - \max_{b \in B} \zeta(y_1, b) \right|
		\leq \max_{b \in B} \left| \zeta(y_0, b) - \zeta(y_1, b) \right|
		< \epsilon. \qedhere
	\]
\end{proof}

\fi

The last lemma characterizes $\mathcal{M}^\rho$ as a legitimate approximation of $\mathcal{M}$ as $\rho \downarrow 0$.
To state it, we first need to introduce some notation.
Recall that $\Delta t$ and $\Delta x$ are functions of the parameter $\rho$ (we write $\Delta t^\rho$ and $\Delta x^\rho$ to make this dependence explicit).
%Let $\kappa_0 \in \mathbb{R}$ and $\kappa$ be a function of the parameter $\rho$ and indices $n$ and $j$ (i.e., $\kappa(\rho, n, j)$).
%We write
%\[
%	\lim_{\substack{
%		\rho \downarrow 0 \\
%		(n \Delta t, j \Delta x) \rightarrow (t, x)
%	}} \kappa(\rho, n, j)
%	= \kappa_0
%\]
%to mean that for any sequence $(\rho_m, n_m, j_m)_m$ where $\rho_m \downarrow 0$ and $(n_m \Delta t^{\rho_m}, j_m \Delta x^{\rho_m}) \rightarrow (t, x)$, we have $\lim_{m \rightarrow \infty} \kappa(\rho_m, n_m, j_m) = \kappa_0$ (it is understood that $0 \leq n_m \Delta t^{\rho_m} \leq T$).
Now, for a function $(\rho,n,j)\mapsto h_j^n(\rho)$ of the parameter $\rho$ and grid indices $n$ and $j$, we define
\begin{multline*}
	\limsup_{\substack{
		\rho\downarrow0\\
		(n\Delta t,j\Delta x)\rightarrow(t,x)
	}}
	h_{j}^{n}(\rho)
	\coloneqq \sup \biggl \{
		\lim_{m\rightarrow\infty}h_{j_{m}}^{n_{m}}(\rho_{m}) \colon
		(\rho_m, n_m, j_m)_m \text{ is a sequence}
		\\
		\text{such that } (h_{j_{m}}^{n_{m}}(\rho_{m}))_m \text{ converges, }
		\rho_{m}\downarrow0 \text{, and }
		(n_{m}\Delta t^{\rho_{m}},j_{m}\Delta x^{\rho_{m}})\rightarrow(t,x)
	\biggr \}.
\end{multline*}
The limit inferior is defined by $\liminf h_j^n(\rho) \coloneqq -\limsup (-h_j^n(\rho))$.

\begin{lemma}
	\label{lem:intervention_limits}
	Let $(w^\rho)_{\rho > 0}$ be a family of uniformly bounded real-valued maps from $[0,T] \times\mathbb{R}$ with half-relaxed limits $\overline{w}$ and $\underline{w}$.
	Denote by $w^{n, \rho} \coloneqq (w^\rho(n \Delta t^\rho, j \Delta x^\rho))_{j = -M}^M$ the image of $w^\rho(n \Delta t^\rho, \cdot)$ on the spatial grid.
	Then, for any $(t, x) \in [0, T] \times \mathbb{R}$,
	\[
		\mathcal{M} \underline{w}(t, x)
		\leq \liminf_{\substack{
			\rho \downarrow 0 \\
			(n \Delta t, j \Delta x) \rightarrow (t, x)
		}} (\mathcal{M}^\rho w^{n, \rho})_j
		\leq \limsup_{\substack{
			\rho \downarrow 0 \\
			(n \Delta t, j \Delta x) \rightarrow (t, x)
		}} (\mathcal{M}^\rho w^{n, \rho})_j
		\leq \mathcal{M} \overline{w}(t, x).
	\]
\end{lemma}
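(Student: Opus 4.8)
The plan is to prove the two outer inequalities; the middle inequality ($\liminf \leq \limsup$) is automatic. By symmetry (replacing $w^\rho$ by $-w^\rho$ swaps the roles, though one must be slightly careful since $\mathcal M$ is not linear — actually the two inequalities are genuinely separate and I would prove each directly). I will focus on the upper inequality $\limsup_{\rho,(n\Delta t,j\Delta x)\to(t,x)} (\mathcal M^\rho w^{n,\rho})_j \leq \mathcal M\overline w(t,x)$; the lower one is dual. Fix a sequence $(\rho_m,n_m,j_m)_m$ realizing (a subsequence converging to) the $\limsup$, so $\rho_m\downarrow 0$, $(n_m\Delta t^{\rho_m}, j_m\Delta x^{\rho_m})\to(t,x)$, and write $t_m \coloneqq n_m\Delta t^{\rho_m}$, $x_m\coloneqq j_m\Delta x^{\rho_m}$. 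For each $m$, since $Z^{\rho_m}(t_m,x_m)$ is a finite (hence compact) nonempty set, pick $z_m\in Z^{\rho_m}(t_m,x_m)$ attaining the max defining $(\mathcal M^{\rho_m}w^{n_m,\rho_m})_{j_m}$, i.e.
\[
	(\mathcal M^{\rho_m}w^{n_m,\rho_m})_{j_m}
	= \interp(w^{n_m,\rho_m}, x_m + \Gamma(t_m,x_m,z_m)) + K(t_m,x_m,z_m).
\]

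Next I would extract a further subsequence (not relabeled) along which $z_m\to \hat z$ for some $\hat z\in Y$; this is possible because, by \ref{enu:comparison_end}, $Z^{\rho_m}$ converges locally uniformly to $Z$ in the Hausdorff metric, so $\operatorname{dist}(z_m, Z(t_m,x_m))\to 0$, and since $Z$ is continuous with $Z(t,x)$ compact and nonempty, the $z_m$ lie eventually in a bounded region of $Y$ and any cluster point $\hat z$ satisfies $\hat z\in Z(t,x)$. (Here I would need to be careful to state precisely the joint compactness being invoked — this is the one mildly delicate point, and I expect it to be the main obstacle: controlling the interplay between the $\rho$-dependent sets $Z^\rho$ and the limit set $Z$ uniformly near $(t,x)$. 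If $Y$ is not locally compact one would instead argue that $Z(t,x)$ compact plus Hausdorff convergence forces $\{z_m\}$ to be eventually contained in a fixed compact neighborhood of $Z(t,x)$.)

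Now set $\hat x \coloneqq x + \Gamma(t,x,\hat z)$ and $x^{(m)} \coloneqq x_m + \Gamma(t_m,x_m,z_m)$; by continuity of $\Gamma$ (assumption \ref{enu:vanilla_start}), $x^{(m)}\to\hat x$. The key step is the interpolation estimate: the monotone linear interpolant satisfies, for the piecewise-constant extension $w^{\rho_m}$,
\[
	\interp(w^{n_m,\rho_m}, x^{(m)}) \leq \sup\{ w^{\rho_m}(t_m, y) \colon |y - x^{(m)}| \leq \Delta x^{\rho_m} \},
\]
since $\interp$ is a convex combination of two adjacent grid values and, outside the grid, it equals the nearest endpoint (so the bound still holds once $Q\to\infty$ puts $\hat x$ strictly inside the grid eventually — this uses \cref{eq:infinite_grid}). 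Taking $\limsup_{m}$ on the right-hand side and using $t_m\to t$, $x^{(m)}\to\hat x$, $\Delta x^{\rho_m}\to 0$, $\rho_m\downarrow 0$, the definition of the upper half-relaxed limit gives $\limsup_m \interp(w^{n_m,\rho_m},x^{(m)}) \leq \overline w(t,\hat x) = \overline w(t, x+\Gamma(t,x,\hat z))$. Combining with $K(t_m,x_m,z_m)\to K(t,x,\hat z)$ by continuity,
\[
	\lim_{m}(\mathcal M^{\rho_m}w^{n_m,\rho_m})_{j_m}
	\leq \overline w(t, x + \Gamma(t,x,\hat z)) + K(t,x,\hat z)
	\leq \sup_{z\in Z(t,x)}\{\overline w(t,x+\Gamma(t,x,z)) + K(t,x,z)\}
	= \mathcal M\overline w(t,x),
\]
using $\hat z\in Z(t,x)$. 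Since the realizing sequence was arbitrary, this proves the upper inequality.

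For the lower inequality $\mathcal M\underline w(t,x) \leq \liminf (\mathcal M^\rho w^{n,\rho})_j$, I would argue in the other direction: fix $z\in Z(t,x)$; by the local uniform Hausdorff convergence $Z^\rho\to Z$ and continuity of $Z^\rho$ in $(t,x)$, for any sequence $\rho_m\downarrow 0$ and $(t_m,x_m)\to(t,x)$ on the grid I can pick $z_m\in Z^{\rho_m}(t_m,x_m)$ with $z_m\to z$; then $(\mathcal M^{\rho_m}w^{n_m,\rho_m})_{j_m}\geq \interp(w^{n_m,\rho_m}, x_m+\Gamma(t_m,x_m,z_m)) + K(t_m,x_m,z_m)$, and the monotone interpolant satisfies the matching lower bound $\interp(w^{n_m,\rho_m}, x^{(m)})\geq \inf\{w^{\rho_m}(t_m,y)\colon |y-x^{(m)}|\leq \Delta x^{\rho_m}\}$, whose $\liminf$ is $\geq \underline w(t, x+\Gamma(t,x,z))$. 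Taking $\liminf_m$ and then the supremum over $z\in Z(t,x)$ yields the claim; one also notes $Z(t,x)$ is nonempty by \ref{enu:vanilla_middle} so the supremum is over a nonempty set. The only subtlety to check carefully is that "interpolant between two adjacent grid values of the piecewise-constant extension" is sandwiched between the inf and sup of $w^\rho(t_m,\cdot)$ over a ball of radius $O(\Delta x^{\rho_m})$ about $x^{(m)}$ — this follows directly from the definition \cref{eq:interpolation} together with the piecewise-constant extension \cref{eq:piecewise_constant}, after accounting for the half-cell offsets in $E(n,j)$.
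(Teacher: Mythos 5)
Your argument is correct and follows essentially the same route as the paper's: bound the monotone linear interpolant by the two nearest grid values, extract a convergent subsequence $z_m \rightarrow \hat z \in Z(t,x)$ for the upper bound (resp.\ approximate an arbitrary $z \in Z(t,x)$ by $z_m \in Z^{\rho_m}$ for the lower bound), and pass to half-relaxed limits; the only cosmetic difference is that the paper isolates a $\delta$-near-maximizer $z^\delta$ and lets $\delta \downarrow 0$ where you fix $z$ and take $\sup_z$ at the end. The compactness concern you flag is in fact resolved by (H2) and (H4) without any further assumption on $Y$: Hausdorff convergence $Z^{\rho_m}(s_m,y_m) \rightarrow Z(t,x)$ gives $\zeta_m \in Z(t,x)$ with $d(z_m,\zeta_m) \rightarrow 0$, and compactness of $Z(t,x)$ yields a subsequence along which $\zeta_m$ (and hence $z_m$) converges with limit in $Z(t,x)$, so no ``compact neighbourhood'' of $Z(t,x)$ in $Y$ is actually needed.
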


\begin{proof}
	We first prove the leftmost inequality.
	Let $(\rho_m, n_m, j_m)_m$ be an arbitrary sequence satisfying $\rho_m \downarrow 0$ and $(n_m \Delta t^{\rho_m}, j_m \Delta x^{\rho_m}) \rightarrow (t, x)$.
	Define $s_m \coloneqq n_m \Delta t^{\rho_m}$ and $y_m \coloneqq j_m \Delta x^{\rho_m}$ for brevity.
	Now, let $\delta > 0$ and choose $z^\delta \in Z(t, x)$ such that
	\[
		\mathcal{M} \underline{w}(t, x)
		\leq \underline{w}(t, x + \Gamma(t, x, z^{\delta})) + K(t, x, z^{\delta}) + \delta.
	\]
	By \ref{enu:comparison_end}, we can pick a sequence $(z_m)_m$ such that $z_m \rightarrow z^\delta$ and $z_m \in Z^{\rho_m}(s_m, y_m)$ for each $m$.
	For brevity, we write $w_j^m$ for the quantity $w^{\rho_m}(s_m, j \Delta x^{\rho_m})$.
	Noting that $w_j^m$ is the $j$-th entry of the vector $w^{n_m, \rho_m}$ defined in the lemma statement,
	\begin{align*}
		(\mathcal{M}^{\rho_m} w^{n_m, \rho_m})_{j_m}
		& = \max_{z \in Z^{\rho_m}(s_m, y_m)} \left \{
			\interp(
				w^{n_m, \rho_m},
				y_m + \Gamma(s_m, y_m, z)
			)
			{+} K(s_m, y_m, z)
		\right \} \\
		& \geq \interp(
			w^{n_m, \rho_m},
			y_m + \Gamma(s_m, y_m, z_m)
		)
		+ K(s_m, y_m, z_m) \\
		& = \alpha_m w_{k_m + 1}^m + (1 - \alpha_m) w_{k_m}^m
		+ K(s_m, y_m, z_m)
	\end{align*}
	where $0 \leq \alpha_m \leq 1$ and, for $m$ sufficiently large,
	\begin{equation}
		%k_m \Delta x^{\rho_m} \rightarrow x + \Gamma(t, x, z^\delta)
		%\mathas
		%m \rightarrow \infty
		k_m \Delta x^{\rho_m}
		\leq y_m + \Gamma(s_m, y_m, z_m)
		< \left( k_m + 1 \right) \Delta x^{\rho_m}
		\label{eq:intervention_limits_1}
	\end{equation}
	(recall \cref{eq:interpolation}).
	Therefore, by \cref{lem:commute},
	\begin{align}
		\liminf_{m \rightarrow \infty} (\mathcal{M}^{\rho_m} w^{n_m, \rho_m})_{j_m}
		& \geq \liminf_{m \rightarrow \infty} \left \{
			\min (
				w_{k_m + 1}^m,
				w_{k_m}^m
			)
			+ K(s_m, y_m, z_m)
		\right \} \nonumber \\
		& \geq \min \left (
			\liminf_{m \rightarrow \infty} w_{k_m + 1}^m,
			\liminf_{m \rightarrow \infty} w_{k_m}^m
		\right )
		+ K(t, x, z^\delta).
		\label{eq:intervention_limits_2}
	\end{align}
	Now, by \cref{eq:intervention_limits_1}, $(s_m, k_m \Delta x^{\rho_m}) \rightarrow (t, x + \Gamma(t, x, z^\delta))$ as $m \rightarrow \infty$.
	Therefore, by the definition of the half-relaxed limit $\underline{w}$,
	\begin{equation}
		\liminf_{m \rightarrow \infty} w_{k_m}^m
		= \liminf_{m \rightarrow \infty} w^{\rho_m}(s_m, k_m \Delta x^{\rho_m})
		\geq \underline{w}(t, x + \Gamma(t, x, z^\delta))
		\label{eq:intervention_limits_3}
	\end{equation}
	and
	\begin{equation}
		\liminf_{m \rightarrow \infty} w_{k_m + 1}^m
		\geq \underline{w}(t, x + \Gamma(t, x, z^\delta)).
		\label{eq:intervention_limits_4}
	\end{equation}
	Applying \cref{eq:intervention_limits_3} and \cref{eq:intervention_limits_4} to \cref{eq:intervention_limits_2},
	\begin{equation}
		\liminf_{m \rightarrow \infty}
		(\mathcal{M}^{\rho_m} w^{n_m, \rho_m})_{j_m}
		\geq \underline{w}(t, x + \Gamma(t, x, z^\delta)) + K(t, x, z^\delta)
		\geq \mathcal{M} \underline{w}(t, x) - \delta.
		\label{eq:intervention_limits_5}
	\end{equation}
	Since $\delta$ is arbitrary, the desired result follows.

	We now handle the rightmost inequality.
	Let $(\rho_m, n_m, j_m)_m$ be a sequence as in the previous paragraph with $s_m$ and $y_m$ defined as before.
	Since $Z^{\rho_m}(s_m, y_m)$ is by definition a finite set (and hence compact), for each $m$, there exists a $z_m \in Z^{\rho_m}(s_m, y_m)$ such that
	\begin{align*}
		(\mathcal{M}^{\rho_m} w^{n_m, \rho_m})_{j_m}
		& = \max_{z \in Z^{\rho_m}(s_m, y_m)} \left \{
			\interp(
				w^{n_m, \rho_m},
				y_m + \Gamma(s_m, y_m, z)
			)
			{+} K(s_m, y_m, z)
		\right \} \\
		& = \interp(
			w^{n_m, \rho_m},
			y_m + \Gamma(s_m, y_m, z_m)
		)
		+ K(s_m, y_m, z_m) \\
		& = \alpha_m w_{k_m + 1}^m + (1 - \alpha_m) w_{k_m}^m + K(s_m, y_m, z_m)
	\end{align*}
	where $0 \leq \alpha_m \leq1$ and $k_m$ satisfies \cref{eq:intervention_limits_1}.
	Now, consider a subsequence of $(z_m)_m$ along which the limit superior
	\[
		\limsup_{m \rightarrow \infty} (\mathcal{M}^{\rho_m} w^{n_m, \rho_m})_{j_m}
	\]
	is attained.
	With a slight abuse of notation, relabel this subsequence $(z_m)_m$.
	By \ref{enu:comparison_end}, $(z_m)_m$ is contained in a compact set and hence we can assume this subsequence was chosen to be convergent with limit $\hat{z}$.
	By \ref{enu:comparison_end}, the right hand side of the inequality
	\[
		d(\hat{z}, Z(t, x)) \leq d(\hat{z}, z_m) + d(z_m, Z(t, x))
	\]
	approaches zero as $m \rightarrow \infty$ and hence we can conclude $\hat{z} \in Z(t, x)$.
	Therefore, similarly to how we established \cref{eq:intervention_limits_2} and \cref{eq:intervention_limits_5} in the previous paragraph,
	\begin{align*}
		\lim_{m \rightarrow \infty} (\mathcal{M}^{\rho_m} w^{n_m, \rho_m})_{j_m}
		& \leq \limsup_{m \rightarrow \infty} \{
			\max (
				w_{k_m + 1}^m,
				w_{k_m}^m
			)
			+ K(s_m, y_m, z_m)
		\} \nonumber \\
		& \leq \max \left (
			\limsup_{m \rightarrow \infty} w_{k_m + 1}^m,
			\limsup_{m \rightarrow \infty} w_{k_m}^m
		\right )
		+ K(t, x, \hat{z}) \\
		& \leq \overline{w}(t, x + \Gamma(t, x, \hat{z})) + K(t, x, \hat{z}) \\
		& \leq \mathcal{M} \overline{w}(t, x). \qedhere
	\end{align*}
\end{proof}

\begin{rem}
	\label{rem:higher_order_interpolation}
	The proof above relies heavily on the fact that $\interp$ is a linear interpolant and hence the ``interpolation weights'' $\alpha$ and $(1 - \alpha)$ in \cref{eq:interpolation} are nonnegative.
	A quadratic interpolant, for example, takes the form
	\[
		\operatorname{quad-interp}(u^n, x)
		= \alpha u_{k - 1}^n + \left( 1 - \alpha - \beta \right) u_k^n + \beta u_{k + 1}^n
	\]
	where it is not necessarily the case that the coefficients $\alpha$, $(1 - \alpha - \beta)$, and $\beta$ are nonnegative.
	This suggests that higher order discretizations of $\mathcal{M}$ are generally precluded by the nonlocal consistency requirement.
\end{rem}

We are now ready to establish the nonlocal consistency of the penalty scheme.

\begin{proof}[Nonlocal consistency proof]
    Let $\varphi \in C^{1,2}([0, T] \times \mathbb{R})$.
    For brevity, we define $\varphi_j^n \coloneqq \varphi(n \Delta t, j \Delta x)$ and $\varphi^n = (\varphi_0^n, \ldots, \varphi_M^n)^\intercal$.
    Let $(w^{\rho})_{\rho > 0}$ be a family of uniformly bounded real-valued maps from $[0, T] \times \mathbb{R}$ with half-relaxed limits $\overline{w}$ and $\underline{w}$.
    Let $(\rho_m, s_m, y_m, \xi_m)_m$ be an arbitrary sequence satisfying
	\begin{equation}
		\rho_m \downarrow 0
		\text{, }
		(s_m, y_m) \rightarrow (t, x)
		\text{, and }
		\xi_m \rightarrow 0
		\mathas m \rightarrow \infty.
		\label{eq:penalty_consistency_0}
	\end{equation}
    Without loss of generality, we will assume that the sequence is chosen such that $s_m = n_m \Delta t$ and $y_m = j_m \Delta x$ are grid points (we have omitted the dependence of $\Delta t$ and $\Delta x$ on $\rho_m$ in the notation).
    For brevity, we define $\alpha \coloneqq (t, x)$ and $\alpha_m \coloneqq (s_m, y_m)$.

	By \cref{eq:penalty_scheme_1} and \cref{eq:penalty_scheme_2},
	\begin{equation}
		S(
			\rho_m,
			\alpha_m,
			\varphi(\alpha_m) + \xi,
			[\varphi + \xi_m]_{\alpha_m},
			\mathcal{N}^{\rho_m} w^{\rho_m}(\alpha_m)
		)
		= \begin{cases}
			\min ( S_m^{(1)}, S_m^{(2)} {+} \epsilon S_m^{(1)} )
			& \hspace{-0.65em} \text{if } n_m < N \\
			S_m^{(3)}
			& \hspace{-0.65em} \text{if } n_m = N
		\end{cases}
		\label{eq:penalty_consistency_1}
	\end{equation}
	where
	\begin{align}
		S_m^{(1)}
		& \coloneqq - \max_{b \in B^{\rho_m}} \left \{
			\frac{ \varphi_{j_m}^{n_m+1} - \varphi_{j_m}^{n_m} }{ \Delta t }
			+ (L_b^{\rho_m} \varphi^{n_m})_{j_m}
			+ f(\alpha_m, b)
		\right \}
		\nonumber \\
		S_m^{(2)}
		& \coloneqq \varphi_{j_m}^{n_m}
		+ \xi_m - (\mathcal{M}^{\rho_m} w^{n_m, \rho_m})_{j_m}
		\nonumber \\
		S_m^{(3)}
		& \coloneqq \varphi_{j_m}^{n_m} + \xi_m - g(y_m).
		\label{eq:penalty_consistency_2}
	\end{align}
	In the above, we have used the notation $w^{n, \rho}$ of \cref{lem:intervention_limits}.
	Now, by \cref{lem:cluster_point},
	\begin{multline}
		\lim_{m \rightarrow \infty} S_m^{(1)}
		%= - \lim_{m \rightarrow \infty} \max_{b \in B^{\rho_m}} \left \{
		%	\varphi_t(\alpha_m)
		%	+ \mu(y_m, b) \varphi_x(\alpha_m)
		%	+ \frac{1}{2} \sigma(y_m, b)^2 \varphi_{xx}(\alpha_m)
		%	+ f(\alpha_m, b)
		%	+ O(\rho_m)
		% \right \} \\
		= - \lim_{m \rightarrow \infty} \max_{b \in B^{\rho_m}} \left \{
			\varphi_t(\alpha_m)
			+ L_b \varphi(\alpha_m)
			+ f(\alpha_m, b)
			+ O(\rho_m)
		\right \}
		\\
		= - \max_{b \in B} \left \{
			\varphi_t(\alpha)
			+ L_b \varphi(\alpha)
			+ f(\alpha, b)
		\right \}.
		\label{eq:penalty_consistency_3}
	\end{multline}
	Moreover, by \cref{lem:intervention_limits},
	\begin{equation}
		\varphi(\alpha) - \mathcal{M} \overline{w}(\alpha)
		\leq \liminf_{m \rightarrow \infty} S_m^{(2)}
		\leq \limsup_{m \rightarrow \infty} S_m^{(2)}
		\leq \varphi(\alpha) - \mathcal{M} \underline{w}(\alpha)
		\label{eq:penalty_consistency_4}
	\end{equation}

	Suppose now that $t < T$.
	Since $s_m \rightarrow t$, we may assume $s_m < T$ (or, equivalently, $n_m < N$) for each $m$.
	In this case, taking limit inferiors of both sides of \cref{eq:penalty_consistency_1} and applying \cref{lem:commute} yields
	\begin{multline*}
		\liminf_{m \rightarrow \infty} S(
			\rho_m,
			\alpha_m,
			\varphi(\alpha_m) + \xi,
			[\varphi + \xi_m]_{\alpha_m},
			\mathcal{N}^{\rho_m} w^{\rho_m}(\alpha_m)
		)
		\\
		= \liminf_{m \rightarrow \infty} \min (
			S_m^{(1)}, S_m^{(2)} + \epsilon S_m^{(1)}
		)
		\geq \min \left (
			\lim_{m \rightarrow \infty} S_m^{(1)},
			\liminf_{m \rightarrow \infty} S_m^{(2)}
		\right ).
	\end{multline*}
	Applying \cref{eq:penalty_consistency_3} and \cref{eq:penalty_consistency_4} to the above,
	\begin{multline}
		\liminf_{m \rightarrow \infty} S(
			\rho_m,
			\alpha_m,
			\varphi(\alpha_m) + \xi,
			[\varphi + \xi_m]_{\alpha_m},
			\mathcal{N}^{\rho_m} w^{\rho_m}(\alpha_m)
		) \\
		\geq \min \left (
			- \max_{b \in B} \left \{
				\varphi_t(\alpha)
				+ L_b \varphi(\alpha)
				+ f(\alpha, b)
			\right \},
			\varphi(\alpha) - \mathcal{M} \overline{w}(\alpha)
		\right ) \\
		= F_*(
			\alpha,
			\varphi(\alpha),
			D \varphi(\alpha),
			D^2 \varphi(\alpha),
			\mathcal{M} \overline{w}(\alpha)
		)
		\label{eq:penalty_consistency_5}
	\end{multline}
	where $D^2 \varphi \equiv \varphi_{xx}$, $D \varphi \equiv (\varphi_t, \varphi_x)$, and $F$ is given by \cref{eq:hjbqvi_operator}.
	In establishing the last equality in the above, we have used the fact that $F$ is continuous away from $t = T$ and hence $F = F_*$ there.
	Now, since $(\rho_m, s_m, y_m, \xi_m)_m$ is an arbitrary sequence satisfying \cref{eq:penalty_consistency_0}, \cref{eq:penalty_consistency_5} implies
	\[
		\liminf_{\substack{
			\rho \downarrow 0 \\
			\beta \rightarrow \alpha \\
			\xi \rightarrow 0
		}} S(
			\rho,
			\beta,
			\varphi(\beta) + \xi,
			[\varphi + \xi_m]_{\beta},
			\mathcal{N}^\rho w^\rho(\beta)
		)
		\geq F_*(
			\alpha,
			\varphi(\alpha),
			D \varphi(\alpha),
			D^2 \varphi(\alpha),
			\mathcal{M} \overline{w}(\alpha)
		),
		%\label{eq:penalty_consistency_6}
	\]
	which is exactly the nonlocal consistency inequality \cref{eq:subconsistency} in the time-dependent case ($\alpha = (t, x)$).
	Symmetrically, we can establish the inequality
	\[
		\limsup_{\substack{
			\rho \downarrow 0 \\
			\beta \rightarrow \alpha \\
			\xi \rightarrow 0
		}} S(
			\rho,
			\beta,
			\varphi(\beta) + \xi,
			[\varphi + \xi_m]_{\beta},
			\mathcal{N}^\rho w^\rho(\beta)
		)
		\leq F^*(
			\alpha,
			\varphi(\alpha),
			D \varphi(\alpha),
			D^2 \varphi(\alpha),
			\mathcal{M} \underline{w}(\alpha)
		),
		%\label{eq:penalty_consistency_7}
	\]
	which corresponds to the nonlocal consistency inequality \cref{eq:superconsistency}.

	Suppose now that $t = T$.
	Since $s_m \rightarrow t$, it is possible that $s_m = T$ (or, equivalently, $n_m = N$) for one or more indices $m$ in the sequence.
	Therefore, by \cref{eq:penalty_consistency_1}
	\begin{multline}
		S(
			\rho_m,
			\alpha_m,
			\varphi(\alpha_m) + \xi,
			[\varphi + \xi_m]_{\alpha_m},
			\mathcal{N}^{\rho_m} w^{\rho_m}(\alpha_m)
		)
		\\
		\geq \min (
			S_m^{(1)},
			S_m^{(2)} + \epsilon S_m^{(1)},
			S_m^{(3)}
		)
		= \min (
			\min \{
				S_m^{(1)},
				S_m^{(2)} + \epsilon S_m^{(1)}
			\},
			S_m^{(3)}
		).
		\label{eq:penalty_consistency_8}
	\end{multline}
	An immediate consequence of the definition of $S_m^{(3)}$ in \cref{eq:penalty_consistency_2} is that
	\begin{equation}
		\lim_{m \rightarrow \infty} S_m^{(3)}
		= \varphi(\alpha) - g(x)
		\geq \min \left(
			\varphi(\alpha) - g(x),
			\varphi(\alpha) - \mathcal{M} \overline{w}(\alpha)
		\right).
		\label{eq:penalty_consistency_9}
	\end{equation}
	Taking limit inferiors in \cref{eq:penalty_consistency_8} and applying \cref{lem:commute}, \cref{eq:penalty_consistency_3}, \cref{eq:penalty_consistency_4}, and \cref{eq:penalty_consistency_9},
	\begin{multline*}
		\liminf_{m \rightarrow \infty} S(
			\rho_m,
			\alpha_m,
			\varphi(\alpha_m) + \xi,
			[\varphi + \xi_m]_{\alpha_m},
			\mathcal{N}^{\rho_m} w^{\rho_m}(\alpha_m)
		) \\
		\geq \min \Bigl (
			\min \left \{
				-\max_{b \in B} \left \{
					\varphi_t(\alpha)
					+ L_b \varphi(\alpha)
					+ f(\alpha, b)
				\right \},
				\varphi(\alpha) - \mathcal{M} \overline{w}(\alpha)
			\right \}, \\
			\min \left \{
				\varphi(\alpha) - g(x),
				\varphi(\alpha) - \mathcal{M} \overline{w}(\alpha)
			\right \}
		\Bigr )
		= F_*(
			\alpha,
			\varphi(\alpha),
			D \varphi(\alpha),
			D^2 \varphi(\alpha),
			\mathcal{M} \overline{w}(\alpha)
		).
	\end{multline*}
	As in the previous paragraph, this implies the nonlocal consistency inequality \cref{eq:subconsistency}.

	It remains to establish \cref{eq:superconsistency} in the case of $t = T$.
	Since $\mathcal{M} g \leq g$ by \ref{enu:vanilla_start}, it follows that $g(y_m) = \max ( g(y_m), \mathcal{M} g(y_m) )$ for each $m$.
	By \cref{rem:wlog}, we can, without loss of generality, assume that $w^\rho$ is a solution of the scheme so that $w^\rho(N \Delta t, j \Delta x) = g(j \Delta x)$ for all $j$, corresponding to the terminal condition.
	Therefore,
	\begin{align*}
		S_m^{(3)}
		& = \varphi_{j_m}^{n_m} + \xi_m - \max \left( g(y_m), \mathcal{M} g(y_m) \right) \\
		& = \min \left(
			\varphi_{j_m}^{n_m} + \xi_m - g(y_m),
			\varphi_{j_m}^{n_m} + \xi_m - \mathcal{M} g(y_m)
		\right) \\
		& = \min \left(
			\varphi_{j_m}^{n_m} + \xi_m - g(y_m),
			\varphi_{j_m}^{n_m} + \xi_m - (\mathcal{M}^\rho w^{N, \rho_m})_{j_m} + O((\Delta x)^2)
		\right)
	\end{align*}
	where in the last equality we have employed the fact that there is $O( (\Delta x)^2 )$ error in approximating the intervention operator $\mathcal{M}$ by the discretized intervention operator $\mathcal{M}^\rho$ due to the linear interpolant.
	 It follows that, letting
	 \[
	 	S_m^{(4)} \coloneqq \min \left \{
	 		\varphi_{j_m}^{n_m} + \xi_m - g(y_m),
			\varphi_{j_m}^{n_m} + \xi_m - (\mathcal{M}^\rho w^{n_m, \rho_m})_{j_m} + O((\Delta x)^2)
	 	\right \},
	 \]
	 we have $S_m^{(3)} = S_m^{(4)}$ whenever $n_m = N$.
	 Therefore, by \cref{eq:penalty_consistency_1},
	 \begin{equation}
	 	S(
	 		\rho_m,
	 		\alpha_m,
	 		\varphi(\alpha_m) + \xi_m,
	 		[\varphi + \xi_m]_{\alpha_m},
	 		\mathcal{N}^{\rho_m} w^{\rho_m}(\alpha_m)
	 	) {\leq}
	 	\max (
	 		\min \{
	 			S_m^{(1)},
	 			S_m^{(2)} + \epsilon S_m^{(1)}
	 		\},
	 		S_m^{(4)}
	 	).
	 	\label{eq:penalty_consistency_10}
	 \end{equation}
	 Moreover, by \cref{lem:commute} and \cref{lem:intervention_limits},
	 \begin{equation}
	 	\limsup_{m \rightarrow \infty} S_m^{(4)}
	 	\leq \min \left \{
	 		\varphi(\alpha) - g(x),
	 		\varphi(\alpha) - \mathcal{M} \underline{w}(\alpha)
	 	\right \}.
	 	\label{eq:penalty_consistency_11}
	 \end{equation}
	 Taking limit superiors in \cref{eq:penalty_consistency_10} and applying \cref{lem:commute},  \cref{eq:penalty_consistency_3}, \cref{eq:penalty_consistency_4}, and \cref{eq:penalty_consistency_11},
	 \begin{multline*}
		\limsup_{m \rightarrow \infty} S(
			\rho_m,
			\alpha_m,
			\varphi(\alpha_m) + \xi,
			[\varphi + \xi_m]_{\alpha_m},
			\mathcal{N}^{\rho_m} w^{\rho_m}(\alpha_m)
		) \\
		\leq \max \Bigl (
			\min \left \{
				-\max_{b \in B} \left \{
					\varphi_t(\alpha)
					+ L_b \varphi(\alpha)
					+ f(\alpha, b)
				\right \},
				\varphi(\alpha) - \mathcal{M} \underline{w}(\alpha)
			\right \}, \\
			\min \left \{
				\varphi(\alpha) - g(x),
				\varphi(\alpha) - \mathcal{M} \underline{w}(\alpha)
			\right \}
		\Bigr )
		= F^*(
			\alpha,
			\varphi(\alpha),
			D \varphi(\alpha),
			D^2 \varphi(\alpha),
			\mathcal{M} \underline{w}(\alpha)
		),
	\end{multline*}
	which establishes \cref{eq:superconsistency}, as desired.
\end{proof}

\subsubsection{Higher dimensions}
\label{subsec:higher_dimensions}

In higher dimensions, the operator $L_b$ may contain cross-derivatives which, in the context of the optimal control problem \cref{eq:control_problem}, corresponds to correlations between the components of $X^{t, x, \gamma}$.
A na\"{i}ve discretization of these cross-derivatives (e.g., using a standard five-point stencil) results in a nonmonotone scheme.
Unfortunately, nonmonotone schemes are well-known to be capable of failing to converge to the viscosity solution \cite{MR2218974}.

It is possible to resolve these issues by discretizing $L_b$ using wide-stencils similarly to the implicit scheme described in \cite[Corollary 5.1]{MR3042570}.
%\footnote{The reader should be aware that the term semi-Lagrangian is used in a different manner in \cite{MR3042570} than it is here.}
%(see also \cite{MR2399429,MR3649430,chen2016monotone} for some instances of wide-stencil schemes).
This results in a scheme that is first order accurate so long as the stencil is of length $O(\Delta x^{1/2})$.
Since wide-stencils are well-understood, we simply mention that it is routine to extend the convergence analysis of this section to a wide-stencil version of the penalty scheme.
In particular, it is trivial to extend \cref{lem:intervention_limits} to higher dimensions by interpreting $j \equiv (j_1, \ldots, j_d)$ as a multi-index and $\interp(\cdot)$ as a monotone multi-dimensional linear interpolation (in this case, $x_j \equiv (x_{j_1}, \ldots, x_{j_d})$ would be a point on a rectilinear grid, e.g., $( \{ k \Delta x \}_{j=-M}^M )^d$).

\subsubsection{Infinite-horizon (steady state) case}

If we assume the functions $f$, $Z$, $\Gamma$, and $K$ are independent of time (i.e., $f(t,x,b)=f(x,b)$, etc.), the infinite-horizon analogue of \cref{eq:hjbqvi} is given by
\begin{equation}
	\min \left (
		- \sup_{b \in B} \left \{
			L_b u - \beta u + f(\cdot, b)
			\right \},
		u - \mathcal{M} u
	\right ) = 0
	\text{ on } \mathbb{R}^d
	\label{eq:hjbqvi_elliptic}
\end{equation}
where $\beta>0$ is a positive discount factor.
Note that in the above, $u$, $L_b u$, and $\mathcal{M} u$ are no longer functions of time and space but rather functions of space alone.
Specifically, we interpret $\mathcal{M}$ above as $\mathcal{M} u(x) \coloneqq \sup_{z \in Z(x)} \{ u(x + \Gamma(x, z)) + K(x, z) \}$.

\ifx\myarticle\undefined%%% SIAM %%%
\else%%% ARTICLE %%%
As with \cref{eq:hjbqvi}, \cref{eq:hjbqvi_elliptic} can be interpreted as the dynamic programming equation associated to the optimal control problem whose value function is given by (compare with \cref{eq:control_problem})
\[
	u(x)
	= \sup_\gamma \mathbb{E} \left[
		\int_0^\infty e^{-\beta t} f(X_t^{x, \gamma}, b_t) dt
		+ \sum_{\tau_i} e^{-\beta \tau_i} K(X_{\tau_i-}^{x, \gamma}, z_i)
	\right]
\]
where
\[
	X_t^{x, \gamma}
	\coloneqq x
	+ \int_0^t \mu(X_s^{x, \gamma}, b_s) ds
	+ \int_0^t \sigma(X_s^{x, \gamma}, b_s) dW_s
	+ \sum_{\tau_i \leq t} \Gamma(X_{\tau_i-}^{x, \gamma}, z_i).
\]
\fi

We can adapt the scheme \cref{eq:penalty} to this setting by considering the discrete equations
\[
	- \max_{b \in B^\rho} \left \{
		(L_b^\rho \vec{u})_j
		- \beta u_j
		+ f(x_j, b)
	\right \}
	- \left( \frac{ (\mathcal{M}^\rho \vec{u})_j - u_j }{\epsilon} \right)^+
	= 0
\]
for $-M \leq j \leq M$, where we have written $\vec{u} \coloneqq (u_{-M}, \ldots, u_M)^\intercal$ to stress that the numerical solution is a vector in $\mathbb{R}^{2M+1}$.
We interpret $\mathcal{M}^\rho$ in the above as $(\mathcal{M}^\rho \vec{u})_j \coloneqq \sup_{z \in Z^\rho(x_j)} \{ \interp(\vec{u}, x_j + \Gamma(x_j, z)) + K(x_j, z) \}$, with $Z^\rho$ satisfying a time-independent version of \ref{enu:comparison_end}.
\ifx\myarticle\undefined%%% SIAM %%%
The analysis of this scheme is nearly identical to that of \cref{eq:penalty}.
\else%%% ARTICLE %%%
The analysis of this scheme is identical to that of \cref{eq:penalty}, save that we obtain the stability bound $\Vert u \Vert_\infty \leq \beta^{-1} \Vert f \Vert_\infty$.
\fi

\subsection{Semi-Lagrangian scheme}

We now turn to a semi-Lagrangian scheme for the HJBQVI introduced in \cite[Section 5.3]{MR3493959}, which can be used if the diffusion coefficient $\sigma$ does not depend on the control (i.e., $\sigma(x, b) = \sigma(x)$).
As we will review shortly, the advantage of this scheme is that it only requires a single solution of a linear system per timestep (compare this with the penalty scheme, which requires an expensive iterative method; see \cref{rem:iterative_method}).
As usual, we focus only on the one-dimensional case ($d=1$) since an extension to higher dimensions can be handled by wide-stencils (see \cref{subsec:higher_dimensions}).
%Attempting to extend the scheme to higher dimensions, we run into the same issues as the penalty scheme (see \cref{subsec:higher_dimensions}).
%Namely, higher dimensions require wide-stencils.
%As such, we restrict our attention to the one-dimensional case ($d = 1$), mentioning only that as with the penalty scheme, extending the proofs to a setting with wide-stencils is routine.
Roughly speaking, the semi-Lagrangian scheme
\begin{enumerate}
	\item
		discretizes first order terms by tracing the path of a deterministic particle as determined by the \emph{Lagrangian derivative} $\mathbb{D}$ (defined below) and
	\item
		discretizes the intervention operator using information from the $(n + 1)$-th timestep, adding a $O(\Delta t)$ term in order to ``linearize'' the scheme.
\end{enumerate}

We start by discussing the first point (from which the scheme's name is derived).
Let $X^{x, b}$ denote a particle whose position at time $s$ is given by
\[
	X_s^{x, b} \coloneqq x + \int_0^s \mu(X_v^{s, b}, b) dv
\]
(in the above, $b$ is a fixed element of $B$; not a process taking values in $B$).
For a ``smooth enough'' function $u$ of time and space, we consider the Lagrangian derivative
\begin{multline}
	\mathbb{D}_b u(t, x)
	\coloneqq	
	\frac{\partial u}{\partial t}(t, x)
	+ \left \langle \mu(x, b), D_x u(t, x) \right \rangle
	= \lim_{s \downarrow 0} \frac{
		u(t + s, X_s^{x, b}) - u(t, x)
	}{s}
	\\
	\approx \frac{
		u(t + \Delta t, x + \mu(x, b) \Delta t) - u(t, x)
	}{\Delta t}
	\label{eq:lagrangian_derivative}
\end{multline}
where the approximation above is justified for $\Delta t > 0$ small enough since in this case, we expect $X_{\Delta t}^{x, b} \approx x + \mu(x, b) \Delta t$.
Substituting $t = n \Delta t$ and $x = x_j$ into \cref{eq:lagrangian_derivative} and using interpolation to approximate the term $u(t + \Delta t, x + \mu(x, b) \Delta t)$ yields
\begin{equation}
	\frac{\partial u}{\partial t}(n \Delta t, x_j)
	+ \mu_j(b) u_x(n \Delta t, x_j)
	\approx \frac{
		\interp(u^{n+1}, x_j + \mu_j(b) \Delta t) - u_j^n
	}{\Delta t}.
	\label{eq:sls_motivation_1}
\end{equation}

As for the second point, we take
\begin{equation}
	\mathcal{M} u(n \Delta t, x_j)
	\approx (\mathcal{M}^\rho u^{n + 1})_j
	+ \frac{1}{2} \sigma_j^2 (\mathcal{D}^2 u^n)_j \Delta t
	\label{eq:sls_motivation_2}
\end{equation}
as a discretization of the intervention operator, where we have used the shorthand $\sigma_j \coloneqq \sigma(x_j)$.
As we will see shortly, the $O(\Delta t)$ term in \cref{eq:sls_motivation_2} allows us to express the scheme as a linear system at each timestep (at the cost of $O(\Delta t)$ discretization error).

Finally, we substitute \cref{eq:sls_motivation_1} and \cref{eq:sls_motivation_2} into the HJBQVI \cref{eq:hjbqvi} to arrive at the semi-Lagrangian scheme:
\[
	\begin{cases}
		\min \left (
			\begin{gathered}
				- \displaystyle{ \max_{b \in B^\rho} } \biggl \{
					\displaystyle{
						\frac{
							\interp(u^{n+1}, x_j + \mu_j(b) \Delta t)
							%U_j(b, u^{n+1})
							- u_j^n
						}{\Delta t}
					}
					+ \frac{1}{2} \sigma_j^2 (\mathcal{D}^2 u^n)_j
					+ f_j^n(b)
				\biggr \},
				\\
				u_j^n - (\mathcal{M}^\rho u^{n + 1})_j
				- \displaystyle{ \frac{1}{2} } \sigma_j^2 (\mathcal{D}^2 u^n)_j \Delta t
			\end{gathered}
		\right ) = 0,
		\\
		\hspace{30.5em} \text{for } n < N
		\\
		u_j^N = g(x_j)
	\end{cases}
\]
By some simple algebra, the discrete equations above are equivalent to
\[
	\begin{cases}
		(A u^n)_j
		= \max \left (
			\displaystyle{ \max_{b \in B^\rho} } \left \{
				\interp(u^{n+1}, x_j + \mu_j(b) \Delta t)
				+ f_j^n(b) \Delta t
			\right \},
			(\mathcal{M}^\rho u^{n+1})_j
		\right ),
		\\
		\hspace{29.5em} \text{for } n < N
		\\
		u_j^N = g(x_j)
	\end{cases}
\]
where $(A u^n)_j \coloneqq u_j^n - \frac{1}{2} \sigma_j^2 (\mathcal{D}^2 u^n)_j \Delta t$.
Note that we can also represent $A$ as the matrix
\[
	A =
	I+\frac{\Delta t}{2(\Delta x)^2}
	\begin{pmatrix}
		0\\
		-(\sigma_{1})^{2} & 2(\sigma_{1})^{2} & -(\sigma_{1})^{2}\\
		 & -(\sigma_{2})^{2} & 2(\sigma_{2})^{2} & -(\sigma_{2})^{2}\\
		 &  & \ddots & \ddots & \ddots\\
		 &  &  & -(\sigma_{M-1})^{2} & 2(\sigma_{M-1})^{2} & -(\sigma_{M-1})^{2}\\
		 &  &  &  &  & 0
	\end{pmatrix},
\]
interpreting $(A u^n)_j$ as the $j$-th entry of the matrix-vector product $A u^n$.
In particular, the matrix $A$ is strictly diagonally dominant and hence nonsingular.

\begin{theorem}
	\label{thm:sls_convergence}
	If the diffusion coefficient $\sigma$ does not depend on the control (i.e., $\sigma(x, b) = \sigma(x)$) and the assumptions of \cref{thm:penalty_convergence} are satisfied, the solution of the semi-Lagrangian scheme converges locally uniformly to the unique bounded solution of the HJBQVI.
\end{theorem}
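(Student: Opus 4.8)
The plan is to deduce \cref{thm:sls_convergence} from \cref{thm:convergence_result} by verifying that the semi-Lagrangian scheme is monotone, stable, and nonlocally consistent; the \cref{eq:nonlocal_definition}-comparison principle required by \cref{thm:convergence_result} is precisely the one already assumed in \cref{thm:penalty_convergence} (it depends only on the PDE data, not on the scheme). First I would put the discrete equations into the abstract form \cref{eq:scheme} by setting
\[
	S(\rho, (n\Delta t, x_j), u_j^n, [u]_j^n, \ell)
	\coloneqq \begin{cases}
		\min\!\left(
			-(\widetilde{\mathcal{L}}^\rho u^n)_j,\;
			u_j^n - \ell - \tfrac{1}{2}\sigma_j^2 (\mathcal{D}^2 u^n)_j \Delta t
		\right), & \text{if } n < N \\
		u_j^n - g(x_j), & \text{if } n = N
	\end{cases}
\]
where $(\widetilde{\mathcal{L}}^\rho u^n)_j \coloneqq \max_{b \in B^\rho} \{ [\interp(u^{n+1}, x_j + \mu_j(b)\Delta t) - u_j^n]/\Delta t + \tfrac12 \sigma_j^2 (\mathcal{D}^2 u^n)_j + f_j^n(b) \}$, together with $\mathcal{N}^\rho u(n\Delta t, x_j) \coloneqq (\mathcal{M}^\rho u^{n+1})_j$, extended to $[0,T]\times\mathbb{R}$ via the convention \cref{eq:piecewise_constant}. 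Just as for the penalty scheme, the $O(\Delta t)$ correction $\tfrac12\sigma_j^2(\mathcal{D}^2\cdot)_j\Delta t$ in the obstacle (and the interpolated $(n{+}1)$-th-timestep values inside $\widetilde{\mathcal{L}}^\rho$) enter through the $[u]_j^n$ and $r$ slots of $S$, not through $\mathcal{N}^\rho$; this is the point to get right, because in the consistency analysis these terms must be evaluated at the smooth test function $\varphi$ (where the correction vanishes in the limit) rather than at the merely bounded family $w^\rho$.

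Monotonicity is essentially automatic: the interpolant \cref{eq:interpolation} has nonnegative weights, the stencil \cref{eq:stencil_2} has the usual nonnegative off-diagonal sign pattern, and $\mathcal{M}^\rho$ is nondecreasing, so raising any value of $u$ away from the point $(n\Delta t, x_j)$ (resp. any value of the argument of $\mathcal{N}^\rho$) can only decrease each branch of $S$; note that no timestep restriction is required, unlike for explicit schemes. For stability I would use the representation $(A u^n)_j = u_j^n - \tfrac12\sigma_j^2(\mathcal{D}^2 u^n)_j\Delta t$ with right-hand side depending only on $u^{n+1}$: the matrix $A$ is a strictly diagonally dominant $M$-matrix with $A\mathbf{1} = \mathbf{1}$, hence $A^{-1} \ge 0$ and $\Vert A^{-1}\Vert_\infty = 1$, and (using that $\interp$ is an average and that $(\mathcal{M}^\rho u^{n+1})_j < \max_j u_j^{n+1}$ by \ref{enu:vanilla_end}) the right-hand side has sup-norm at most $\Vert u^{n+1}\Vert_\infty + \Vert f\Vert_\infty\Delta t$; backward induction from $\Vert u^N\Vert_\infty \le \Vert g\Vert_\infty$ then gives the uniform bound $\Vert u^\rho\Vert_\infty \le \Vert g\Vert_\infty + \Vert f\Vert_\infty T$. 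Nonsingularity of $A$ also yields existence and uniqueness of the discrete solution at each timestep, which is exactly the single-linear-solve cost advertised earlier.

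The substantive part is nonlocal consistency, and here I would transcribe the nonlocal consistency proof of the penalty scheme almost verbatim. Given $\varphi \in C^{1,2}([0,T]\times\mathbb{R})$, a point $\alpha = (t,x)$, an arbitrary family $(w^\rho)$, and an arbitrary sequence $(\rho_m, \alpha_m, \xi_m)$ with $\rho_m\downarrow 0$, $\alpha_m = (n_m\Delta t, j_m\Delta x)\to\alpha$, $\xi_m\to 0$, I would write the scheme value $S(\rho_m, \alpha_m, \varphi(\alpha_m)+\xi_m, [\varphi+\xi_m]_{\alpha_m}, \mathcal{N}^{\rho_m}w^{\rho_m}(\alpha_m))$ in terms of $S_m^{(1)} \coloneqq -(\widetilde{\mathcal{L}}^{\rho_m}\varphi^{n_m})_{j_m}$, $S_m^{(2)} \coloneqq \varphi_{j_m}^{n_m} + \xi_m - (\mathcal{M}^{\rho_m}w^{n_m+1,\rho_m})_{j_m} - \tfrac12\sigma_{j_m}^2(\mathcal{D}^2\varphi^{n_m})_{j_m}\Delta t$, and $S_m^{(3)} \coloneqq \varphi_{j_m}^{n_m} + \xi_m - g(y_m)$ (the additive constants $\xi_m$ cancelling inside the second difference), mirroring the decomposition \cref{eq:penalty_consistency_1}--\cref{eq:penalty_consistency_2}. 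Then a Taylor expansion of $\varphi$ combined with the $O(\Delta x^2)$ error of the linear interpolant gives $[\interp(\varphi^{n_m+1}, x_{j_m}+\mu_{j_m}(b)\Delta t) - \varphi_{j_m}^{n_m}]/\Delta t = \varphi_t(\alpha_m) + \mu(x_{j_m},b)\varphi_x(\alpha_m) + O(\rho_m)$ and $\tfrac12\sigma_{j_m}^2(\mathcal{D}^2\varphi^{n_m})_{j_m} = \tfrac12\sigma(x_{j_m})^2\varphi_{xx}(\alpha_m) + O(\rho_m^2)$ uniformly in $b\in B^{\rho_m}$, so \cref{lem:cluster_point} yields $S_m^{(1)} \to -\max_{b\in B}\{\varphi_t + L_b\varphi + f\}(\alpha)$; since $\Delta t\to 0$ annihilates the correction in $S_m^{(2)}$ and $((n_m{+}1)\Delta t, j_m\Delta x)\to\alpha$, \cref{lem:intervention_limits} applied to the shifted index gives $\varphi(\alpha) - \mathcal{M}\overline{w}(\alpha) \le \liminf_m S_m^{(2)} \le \limsup_m S_m^{(2)} \le \varphi(\alpha) - \mathcal{M}\underline{w}(\alpha)$; and $S_m^{(3)}\to\varphi(\alpha) - g(x)$. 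Combining these through \cref{lem:commute} exactly as in the penalty proof — splitting into the cases $t < T$ (where eventually $n_m < N$) and $t = T$ (where $n_m = N$ is possible), and in the latter case invoking $\mathcal{M}g\le g$ from \ref{enu:vanilla_start} together with \cref{rem:wlog} (so that $w^{\rho}(N\Delta t,\cdot) = g$ and $(\mathcal{M}^{\rho}w^{N,\rho})_j = \mathcal{M}g(x_j) + O(\Delta x^2)$) to sharpen the terminal superconsistency bound — establishes \cref{eq:subconsistency} and \cref{eq:superconsistency} with $F$ as in \cref{eq:hjbqvi_operator}. An application of \cref{thm:convergence_result} then finishes the proof, and the extensions to higher dimensions (wide stencils) and to the infinite-horizon case go through as for the penalty scheme.

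I expect the only genuine obstacle to be bookkeeping rather than anything conceptual. The one place needing care is the consistency of the combined time/first-order difference quotient in $S_m^{(1)}$: the interpolation error $O(\Delta x^2)$, after division by $\Delta t$, contributes $O(\Delta x^2/\Delta t)$, which vanishes only because of the prescribed scaling $\Delta t, \Delta x = \const\,\rho$ — a CFL-type balance that has no analogue in the penalty scheme's purely temporal difference. Beyond that, and the already-noted subtlety of keeping the $O(\Delta t)$ obstacle correction attached to the test function, every step transfers mechanically from the penalty scheme, since \cref{lem:commute}, \cref{lem:cluster_point}, and \cref{lem:intervention_limits} are stated generally enough to accommodate $\mathcal{M}^\rho$ evaluated at the $(n{+}1)$-th timestep.
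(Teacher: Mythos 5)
Your proposal is correct and follows essentially the same strategy as the paper's proof: reduce to \cref{thm:convergence_result}, read monotonicity off the sign pattern of the linear interpolation weights and the second-difference stencil, establish stability via a discrete maximum principle, and obtain nonlocal consistency by transcribing the penalty-scheme argument with the Taylor computation \cref{eq:sls_consistency_1} and a reindexed application of \cref{lem:intervention_limits} at timestep $n+1$ (which is valid because $(n_m+1)\Delta t \to t$ whenever $n_m \Delta t \to t$). The one place you diverge is the stability substep: you invoke the $M$-matrix inverse bound $\Vert A^{-1} \Vert_\infty = 1$ (from $A\mathbf{1} = \mathbf{1}$ and $A^{-1} \geq 0$) and push it through a backward induction, whereas the paper evaluates the scheme at the spatial maximizer and minimizer $j\pm$ of $u^n$, where $(\mathcal{D}^2 u^n)_{j\pm}$ has a definite sign and $\interp(u^{n+1},\cdot)$ and $(\mathcal{M}^\rho u^{n+1})_j$ are dominated by $\max_j u_j^{n+1}$ via \ref{enu:vanilla_end}. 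Both are faces of the same discrete maximum principle and give $\Vert u^\rho \Vert_\infty \leq \Vert g \Vert_\infty + \Vert f \Vert_\infty T$; your route packages nonsingularity and the backward induction a little more transparently at the cost of checking the row-sum identity, which you correctly verify. Your flags about the CFL-type balance $(\Delta x)^2 / \Delta t = O(\rho)$ in $S_m^{(1)}$ and about keeping the $O(\Delta t)$ obstacle correction attached to the test function rather than to $w^\rho$ are exactly the subtleties the paper's proof is organized around.
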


As with the penalty scheme, we write the semi-Lagrangian scheme in the form \cref{eq:scheme} by taking
\begin{multline*}
	S(\rho, (n \Delta t, j \Delta x), u_j^n, [u]_j^n, \ell) \coloneqq
	\\
	\begin{cases}
		\min \left (
			\begin{gathered}
				- \displaystyle{ \max_{b \in B^\rho} } \biggl \{
					\displaystyle{
						\frac{
							\interp(u^{n+1}, x_j + \mu_j(b) \Delta t)
							- u_j^n
						}{\Delta t}
					}
					+ \frac{1}{2} \sigma_j^2 (\mathcal{D}^2 u^n)_j
					+ f_j^n(b)
				\biggr \},
				\\
				u_j^n - \ell
				- \displaystyle{ \frac{1}{2} } \sigma_j^2 (\mathcal{D}^2 u^n)_j \Delta t
			\end{gathered}
		\right ),
		\\
		\hspace{29.5em} \text{if } n < N
		\\
		u_j^n - g(x_j),
		\hspace{24.4em} \text{if } n = N
	\end{cases}
\end{multline*}
and
\[
	\mathcal{N}^\rho u(n \Delta t, j \Delta x)
	\coloneqq (\mathcal{M}^\rho u^{n + 1})_j.
\]
\ifx\myarticle\undefined%%% SIAM %%%
As usual, the stability and monotonicity arguments, which appear in the preprint version \href{https://arxiv.org/abs/1705.02922}{arXiv:1705.02922}, are standard and hence omitted.
Below, we give only a proof of nonlocal consistency.
\else%%% ARTICLE %%%
Below, we handle the stability, monotonicity, and nonlocal consistency of the semi-Lagrangian scheme to establish \cref{thm:sls_convergence}.
%In particular, if $Q = \const$, the approximation \cref{eq:sls_motivation_1} may introduce ``overstepping error'' that does not vanish as $\rho \downarrow 0$.
%Since in a practical implementation we take $Q = \const$, it is of great importance to ensure that such error is avoided.
%We will see shortly that this error can be made to vanish by imposing restrictions on the distances $x_M - x_{M-1}$ and $x_{-M+1} - x_{-M}$ between boundary grid points.

\subsubsection{Monotonicity}

\begin{proof}[Monotonicity proof]
	The proof is identical to that of \cref{subsubsec:penalty_monotonicity}, save that \cref{eq:penalty_monotonicity} becomes
	\begin{multline*}
		S(\rho, (n \Delta t, x_j), u_j^n, [u]_j^n, \ell)
		- S(\rho, (n \Delta t, x_j), w_j^n, [w]_j^n, \ell)
		\\
		\leq \max_{b \in B^\rho} \left \{
			\frac{\interp(v^{n + 1}, x_j + \mu_j(b) \Delta t)}{\Delta t}
			+ \frac{1}{2} \sigma_j^2 (\mathcal{D}^2 v^n)_j
		\right \}
		\leq 0.
	\end{multline*}
	In the above, we have used the fact that the terms
	\[
		\interp(v^{n + 1}, x_j + \mu_j(b) \Delta t)
		= \alpha v_{k+1}^{n + 1} + \left(1 - \alpha\right) v_k^{n + 1}
	\]
	and $(\mathcal{D}^2 v^n)_j$ are nonpositive.
\end{proof}

\subsubsection{Stability}

Similar to the penalty scheme, we obtain a bound of $\Vert g \Vert_\infty + \Vert f \Vert_\infty T$ on the solution.

\begin{proof}[Stability proof]
	Let $u$ be the solution of the semi-Lagrangian scheme.
	Fix a particular timestep $n < N$.
	Note that for any $j$, $\min_j u_j^{n + 1} \leq \interp(u^{n+1}, \cdot) \leq \max_j u_j^{n + 1}$ and $(\mathcal{M}^\rho u^{n+1})_j \leq \max_j u_j^{n + 1}$ (since $K \leq 0$ by \ref{enu:vanilla_end}).
	Now, let $j-$ and $j+$ be defined as in \cref{subsubsec:penalty_stability}.
	It is readily verified that $(\mathcal{D}^2 u^n)_{j-} \geq 0$ and $(\mathcal{D}^2 u^n)_{j+} \leq 0$.
	Therefore,
	\begin{multline*}
		u_{j-}^n
		\geq u_{j-}^n - \frac{1}{2} \sigma_{j-}^2 (\mathcal{D}^2 u^n)_{j-} \Delta t
		= (A u^n)_{j-}
		\\
		= \max \left (
			\max_{b \in B^\rho} \left \{
				\interp(u^{n+1}, x_{j-} + \mu_{j-}(b) \Delta t)
				+ f_{j-}^n(b) \Delta t
			\right \},
			(\mathcal{M}^\rho u^{n+1})_{j-}
		\right )
		\\
		\geq \max_{b \in B^\rho} \left \{
			\interp(u^{n+1}, x_{j-} + \mu_{j-}(b) \Delta t)
			+ f_{j-}^n(b) \Delta t
		\right \}
		\geq \min_j u_j^{n + 1} - \Vert f \Vert_\infty \Delta t
	\end{multline*}
	and
	\begin{multline*}
		u_{j+}^n
		\leq u_{j+}^n - \frac{1}{2} \sigma_{j+}^2 (\mathcal{D}^2 u^n)_{j+} \Delta t
		= (A u^n)_{j+}
		\\
		= \max \left (
			\max_{b \in B^\rho} \left \{
				\interp(u^{n+1}, x_{j+} + \mu_{j+}(b) \Delta t)
				+ f_{j+}^n(b) \Delta t
			\right \},
			(\mathcal{M}^\rho u^{n+1})_{j+}
		\right )
		\\
		\leq \max \left(
			\max_j u_j^{n + 1}
			+ \Vert f \Vert_\infty \Delta t
			,
			\max_j u_j^{n + 1}
		\right)
		\leq \max_j u_j^{n + 1} + \Vert f \Vert_\infty \Delta t.
	\end{multline*}
	The remainder of the proof proceeds as in \cref{subsubsec:penalty_stability}.
\end{proof}

\subsubsection{Nonlocal consistency}

\fi

\begin{proof}[Nonlocal consistency proof]
	Let $\varphi \in C^{1,2}([0, T] \times \mathbb{R})$.
    For brevity, we define $\varphi_j^n \coloneqq \varphi(n \Delta t, j \Delta x)$ and $\varphi^n = (\varphi_0^n, \ldots, \varphi_M^n)^\intercal$.
    Let $(\rho_m, s_m, y_m, \xi_m)_m$ be an arbitrary sequence satisfying \cref{eq:penalty_consistency_0} chosen such that $s_m = n_m \Delta t$ and $y_m = j_m \Delta x$ are grid points.
    Then,
    \begin{multline}
		\frac{
			\interp(
				\varphi^{n_m + 1} + \xi_m,
				y_m + \mu_{j_m}(b) \Delta t
			) - \varphi(s_m, y_m) - \xi_m
		}{\Delta t}
		\\
		= \frac{
			\varphi(s_m + \Delta t, y_m + \mu_{j_m}(b) \Delta t)
			- \varphi(s_m, y_m) + \xi_m - \xi_m
		}{\Delta t}
		+ O \left( \frac{(\Delta x)^2}{\Delta t} \right)
		\\
		= \varphi_t(t, x) + \mu(x, b) \varphi_x(t, x)
		+ O \left( \frac{(\Delta x)^2}{\Delta t} + \Delta t \right)
		= \varphi_t(t, x) + \mu(x, b) \varphi_x(t, x)
		+ O(\rho_m)
		\label{eq:sls_consistency_1}
    \end{multline}
    by a Taylor expansion.
    In the above, we used the fact that since $Q \rightarrow \infty$ as $\rho \downarrow 0$, we can always find $m$ large enough such that $y_m + \mu_{j_m}(b) \Delta t$ is a point inside $[-Q, Q]$.

    The remaining arguments are a slight modification of those for the penalty scheme.
    In particular, we need only modify the proof by setting $\epsilon = 0$ in \cref{eq:penalty_consistency_1} and replacing the definitions of $S_m^{(1)}$ and $S_m^{(2)}$ in \cref{eq:penalty_consistency_2} by
    \begin{multline*}
		\hat{S}_m^{(1)}
		= -\max_{b \in B^{\rho_m}} \biggl \{
			\frac{
				\interp(
					\varphi^{n_m + 1} + \xi_m,
					y_m + \mu_{j_m}(b) \Delta t
				) - \varphi(s_m, y_m) - \xi_m
			}{\Delta t}
			\\
			+ \frac{1}{2} \sigma_j^2 (\mathcal{D}^2 \varphi^{n_m})_{j_m}
			+ f_{j_m}^{n_m}(b)
		\biggr \}
    \end{multline*}
    and
    \[
    		\hat{S}_m^{(2)}
    		\coloneqq \varphi_{j_m}^{n_m} + \xi_m - (\mathcal{M}^{\rho_m} w^{n_m + 1, \rho_m})_{j_m}
    		- \frac{1}{2} \sigma_j^2 (\mathcal{D}^2 \varphi^{n_m})_{j_m} \Delta t
    \]
    where we have used the symbol $\hat{\cdot}$ to distinguish new  definitions from the old.
    By \cref{eq:sls_consistency_1},
    \[
		\lim_{m \rightarrow \infty} \hat{S}_m^{(1)} = \lim_{m \rightarrow \infty} S_m^{(1)}.
    \]
    Moreover, by \cref{lem:intervention_limits},
    \[
		\varphi(t, x) - \mathcal{M} \overline{w}(t, x)
		\leq \liminf_{m \rightarrow \infty} \hat{S}_m^{(2)}
		\leq \limsup_{m \rightarrow \infty} \hat{S}_m^{(2)}
		\leq \varphi(t, x) - \mathcal{M} \underline{w}(t, x)
    \]
    (compare with \eqref{eq:penalty_consistency_4}).
\end{proof}

%{\bibliosize
\bibliography{main}

\begin{thebibliography}{10}

\bibitem{MR3654873}
{\sc A.~Altarovici, M.~Reppen, and H.~M. Soner}, {\em Optimal consumption and
  investment with fixed and proportional transaction costs}, SIAM J. Control
  Optim., 55 (2017), pp.~1673--1710, \url{https://doi.org/10.1137/15M1053633},
  \url{https://doi.org/10.1137/15M1053633}.

\bibitem{253954}
{\sc P.~Azimzadeh}, {\em Discontinuity of solutions to approximation schemes in
  the {Barles--Souganidis} framework}.
\newblock MathOverflow, 2016, \url{https://mathoverflow.net/q/253954}.
\newblock (2016-11-12).

\bibitem{azimzadeh2017zero}
{\sc P.~Azimzadeh}, {\em A zero-sum stochastic differential game with impulses,
  precommitment, and unrestricted cost functions}, Applied Mathematics \&
  Optimization,  (2017), pp.~1--32,
  \url{https://doi.org/10.1007/s00245-017-9445-x}.

\bibitem{MR3493959}
{\sc P.~Azimzadeh and P.~A. Forsyth}, {\em Weakly chained matrices, policy
  iteration, and impulse control}, SIAM J. Numer. Anal., 54 (2016),
  pp.~1341--1364, \url{https://doi.org/10.1137/15M1043431}.

\bibitem{MR3150265}
{\sc J.~Babbin, P.~A. Forsyth, and G.~Labahn}, {\em A comparison of iterated
  optimal stopping and local policy iteration for {A}merican options under
  regime switching}, J. Sci. Comput., 58 (2014), pp.~409--430,
  \url{https://doi.org/10.1007/s10915-013-9739-3}.

\bibitem{MR3200009}
{\sc S.~Baccarin and D.~Marazzina}, {\em Optimal impulse control of a portfolio
  with a fixed transaction cost}, CEJOR Cent. Eur. J. Oper. Res., 22 (2014),
  pp.~355--372, \url{https://doi.org/10.1007/s10100-013-0304-9}.

\bibitem{MR1484411}
{\sc M.~Bardi and I.~Capuzzo-Dolcetta}, {\em Optimal control and viscosity
  solutions of {H}amilton-{J}acobi-{B}ellman equations}, Systems \& Control:
  Foundations \& Applications, Birkh\"auser Boston, Inc., Boston, MA, 1997,
  \url{https://doi.org/10.1007/978-0-8176-4755-1}.
\newblock With appendices by Maurizio Falcone and Pierpaolo Soravia.

\bibitem{MR784578}
{\sc G.~Barles}, {\em Deterministic impulse control problems}, SIAM J. Control
  Optim., 23 (1985), pp.~419--432, \url{https://doi.org/10.1137/0323027}.

\bibitem{MR2422079}
{\sc G.~Barles and C.~Imbert}, {\em Second-order elliptic integro-differential
  equations: viscosity solutions' theory revisited}, Ann. Inst. H. Poincar\'e
  Anal. Non Lin\'eaire, 25 (2008), pp.~567--585,
  \url{https://doi.org/10.1016/j.anihpc.2007.02.007}.

\bibitem{MR1916291}
{\sc G.~Barles and E.~R. Jakobsen}, {\em On the convergence rate of
  approximation schemes for {H}amilton-{J}acobi-{B}ellman equations}, M2AN
  Math. Model. Numer. Anal., 36 (2002), pp.~33--54,
  \url{https://doi.org/10.1051/m2an:2002002},
  \url{https://doi.org/10.1051/m2an:2002002}.

\bibitem{MR1115933}
{\sc G.~Barles and P.~E. Souganidis}, {\em Convergence of approximation schemes
  for fully nonlinear second order equations}, Asymptotic Anal., 4 (1991),
  pp.~271--283, \url{https://doi.org/10.3233/ASY-1991-4305}.

\bibitem{MR3070528}
{\sc E.~Bayraktar, T.~Emmerling, and J.~Menaldi}, {\em On the impulse control
  of jump diffusions}, SIAM J. Control Optim., 51 (2013), pp.~2612--2637,
  \url{https://doi.org/10.1137/120863836}.

\bibitem{MR3615458}
{\sc C.~Belak, S.~Christensen, and F.~T. Seifried}, {\em A {G}eneral
  {V}erification {R}esult for {S}tochastic {I}mpulse {C}ontrol {P}roblems},
  SIAM J. Control Optim., 55 (2017), pp.~627--649,
  \url{https://doi.org/10.1137/16M1082822}.

\bibitem{MR2597608}
{\sc A.~C. B\'elanger, P.~A. Forsyth, and G.~Labahn}, {\em Valuing the
  guaranteed minimum death benefit clause with partial withdrawals}, Appl.
  Math. Finance, 16 (2009), pp.~451--496,
  \url{https://doi.org/10.1080/13504860903075464}.

\bibitem{MR673169}
{\sc A.~Bensoussan and J.-L. Lions}, {\em Contr\^ole impulsionnel et
  in\'equations quasi variationnelles}, vol.~11 of M\'ethodes Math\'ematiques
  de l'Informatique [Mathematical Methods of Information Science],
  Gauthier-Villars, Paris, 1982.

\bibitem{MR2812853}
{\sc B.~Bouchard, N.-M. Dang, and C.-A. Lehalle}, {\em Optimal control of
  trading algorithms: a general impulse control approach}, SIAM J. Financial
  Math., 2 (2011), pp.~404--438, \url{https://doi.org/10.1137/090777293}.

\bibitem{MR3071398}
{\sc Y.-S.~A. Chen and X.~Guo}, {\em Impulse control of multidimensional jump
  diffusions in finite time horizon}, SIAM J. Control Optim., 51 (2013),
  pp.~2638--2663, \url{https://doi.org/10.1137/110854205}.

\bibitem{MR2407322}
{\sc Z.~Chen and P.~A. Forsyth}, {\em A numerical scheme for the impulse
  control formulation for pricing variable annuities with a guaranteed minimum
  withdrawal benefit ({GMWB})}, Numer. Math., 109 (2008), pp.~535--569,
  \url{https://doi.org/10.1007/s00211-008-0152-z}.

\bibitem{MR3053571}
{\sc A.~Cosso}, {\em Stochastic differential games involving impulse controls
  and double-obstacle quasi-variational inequalities}, SIAM J. Control Optim.,
  51 (2013), pp.~2102--2131, \url{https://doi.org/10.1137/120880094}.

\bibitem{MR2735526}
{\sc M.~H.~A. Davis, X.~Guo, and G.~Wu}, {\em Impulse control of
  multidimensional jump diffusions}, SIAM J. Control Optim., 48 (2010),
  pp.~5276--5293, \url{https://doi.org/10.1137/090780419}.

\bibitem{MR3042570}
{\sc K.~Debrabant and E.~R. Jakobsen}, {\em Semi-{L}agrangian schemes for
  linear and fully non-linear diffusion equations}, Math. Comp., 82 (2013),
  pp.~1433--1462, \url{https://doi.org/10.1090/S0025-5718-2012-02632-9}.

\bibitem{MR2857450}
{\sc A.~Fahim, N.~Touzi, and X.~Warin}, {\em A probabilistic numerical method
  for fully nonlinear parabolic {PDE}s}, Ann. Appl. Probab., 21 (2011),
  pp.~1322--1364, \url{https://doi.org/10.1214/10-AAP723}.

\bibitem{MR3145856}
{\sc H.~Guan and Z.~Liang}, {\em Viscosity solution and impulse control of the
  diffusion model with reinsurance and fixed transaction costs}, Insurance
  Math. Econom., 54 (2014), pp.~109--122,
  \url{https://doi.org/10.1016/j.insmatheco.2013.11.003}.

\bibitem{MR2486085}
{\sc X.~Guo and G.~Wu}, {\em Smooth fit principle for impulse control of
  multidimensional diffusion processes}, SIAM J. Control Optim., 48 (2009),
  pp.~594--617, \url{https://doi.org/10.1137/080716001}.

\bibitem{MR1240006}
{\sc K.~Ishii and N.~Yamada}, {\em Viscosity solutions of nonlinear second
  order elliptic {PDE}s involving nonlocal operators}, Osaka J. Math., 30
  (1993), pp.~439--455, \url{http://projecteuclid.org/euclid.ojm/1200784539}.

\bibitem{MR3626187}
{\sc R.~Korn, Y.~Melnyk, and F.~T. Seifried}, {\em Stochastic impulse control
  with regime-switching dynamics}, European J. Oper. Res., 260 (2017),
  pp.~1024--1042, \url{https://doi.org/10.1016/j.ejor.2016.12.029}.

\bibitem{MR1217486}
{\sc H.~J. Kushner and P.~G. Dupuis}, {\em Numerical methods for stochastic
  control problems in continuous time}, vol.~24 of Applications of Mathematics
  (New York), Springer-Verlag, New York, 1992,
  \url{https://doi.org/10.1007/978-1-4684-0441-8},
  \url{https://doi.org/10.1007/978-1-4684-0441-8}.

\bibitem{MR2218974}
{\sc A.~M. Oberman}, {\em Convergent difference schemes for degenerate elliptic
  and parabolic equations: {H}amilton-{J}acobi equations and free boundary
  problems}, SIAM J. Numer. Anal., 44 (2006), pp.~879--895,
  \url{https://doi.org/10.1137/S0036142903435235}.

\bibitem{MR2109687}
{\sc B.~{\O}ksendal and A.~Sulem}, {\em Applied stochastic control of jump
  diffusions}, Universitext, Springer-Verlag, Berlin, 2005,
  \url{https://doi.org/10.1007/978-3-540-69826-5}.

\bibitem{picarelli2017boundary}
{\sc A.~Picarelli, C.~Reisinger, and J.~R. Arto}, {\em Boundary mesh refinement
  for semi-lagrangian schemes},  (2017).

\bibitem{MR3661103}
{\sc C.~Reisinger and J.~Rotaetxe~Arto}, {\em Boundary treatment and multigrid
  preconditioning for semi-{L}agrangian schemes applied to
  {H}amilton-{J}acobi-{B}ellman equations}, J. Sci. Comput., 72 (2017),
  pp.~198--230, \url{https://doi.org/10.1007/s10915-016-0351-1},
  \url{https://doi.org/10.1007/s10915-016-0351-1}.

\bibitem{MR2568293}
{\sc R.~C. Seydel}, {\em Existence and uniqueness of viscosity solutions for
  {QVI} associated with impulse control of jump-diffusions}, Stochastic
  Process. Appl., 119 (2009), pp.~3719--3748,
  \url{https://doi.org/10.1016/j.spa.2009.07.004}.

\bibitem{MR1306930}
{\sc S.~J. Tang and J.~M. Yong}, {\em Finite horizon stochastic optimal
  switching and impulse controls with a viscosity solution approach},
  Stochastics Stochastics Rep., 45 (1993), pp.~145--176,
  \url{https://doi.org/10.1080/17442509308833860}.

\end{thebibliography}
%}

\appendix

\ifx\myarticle\undefined%%% SIAM %%%
\clearpage
\else
\fi

\section{Comparison principle for the HJBQVI}
\label{app:comparison_principle}

We now give a comparison principle for the HJBQVI.
Recall that the HJBQVI is given by \cref{eq:pde} when $\overline{\Omega} = [0, T] \times \mathbb{R}^d$ and $\mathcal{M}$ and $F$ are defined in \cref{eq:intervention} and \cref{eq:hjbqvi_operator}.

\begin{proposition}
	Suppose \ref{enu:vanilla_start}\textendash \ref{enu:comparison_end} and that $\mu$ and $\sigma$ are Lipschitz in $x$ uniformly in $b$ (i.e.,
	$
		\left| \mu(x,b) - \mu(y,b) \right|
		+ \left| \sigma(x,b) - \sigma(y,b) \right|
		\leq \const \left| x - y \right|
	$)
	Then, the HJBQVI satisfies a \cref{eq:nonlocal_definition}-comparison principle in $B([0, T] \times \mathbb{R}^d)$ (for any $d \geq 1$).
\end{proposition}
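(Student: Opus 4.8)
\section*{Proof proposal}

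The plan is to argue by contradiction using the doubling-of-variables method of Crandall--Ishii--Lions, handling the intervention term in the spirit of \cref{def:viscosity_solution} (test functions are never applied to $\mathcal{M}$) and letting the strict impulse cost \ref{enu:vanilla_end} play the role that continuity of the obstacle plays in an ordinary variational inequality. So suppose $u$ is a \cref{eq:nonlocal_definition}-subsolution and $w$ a \cref{eq:nonlocal_definition}-supersolution with $\bar M \coloneqq \sup_{[0,T]\times\mathbb{R}^d}(u^*-w_*)>0$; write $u=u^*$, $w=w_*$ (both bounded). Since the equation carries no zeroth-order term, I would first replace $u$ by $u_\theta(t,x)\coloneqq u(t,x)-\theta(T-t)$ for small $\theta>0$: as $\mathcal{M}u_\theta=\mathcal{M}u-\theta(T-t)$, the gap $u_\theta-\mathcal{M}u_\theta$ is unchanged, $u_\theta(T,\cdot)=u(T,\cdot)\leq\max(g,\mathcal{M}u_\theta(T,\cdot))$, and $u_\theta$ is a \emph{strict} subsolution of the Hamilton--Jacobi--Bellman part; one sends $\theta\downarrow0$ at the end. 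It is also worth recording that, under \ref{enu:vanilla_start}\textendash{}\ref{enu:comparison_end}, the map $z\mapsto u(t,x+\Gamma(t,x,z))+K(t,x,z)$ is USC on the compact set $Z(t,x)$, so $\mathcal{M}u(t,x)$ is attained; moreover $\mathcal{M}u$ is USC and $\mathcal{M}w$ is LSC (this follows from continuity of $\Gamma$, $K$ and of $(t,x)\mapsto Z(t,x)$ in the Hausdorff metric, which is part of \ref{enu:comparison_end}).

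The key preliminary step, and the one that uses \ref{enu:vanilla_end}, is a reduction that rules out the ``obstacle branch'' of the $\min$ at the points that matter. Fix $\eta\in(0,\bar M)$ and set $c\coloneqq-\sup_{t,x,z}K(t,x,z)>0$. Starting from any $(t_0,x_0)$ with $(u-w)(t_0,x_0)>\bar M-\eta$, I would iterate: while the current point satisfies $u\leq\mathcal{M}u$, pick $z$ attaining $\mathcal{M}u$ there and move to $x\mapsto x+\Gamma(t,x,z)$ (the time coordinate is unchanged). Because $K\leq-c$, this step raises the value of $u$ by at least $c$, while the supersolution inequality $w\geq\mathcal{M}w\geq w(\cdot,\cdot+\Gamma)+K$ with the \emph{same} $z\in Z(t,x)$ shows $u-w$ does not decrease; the terminal-time case is absorbed similarly using $\mathcal{M}g\leq g$. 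Since $u$ is bounded this cannot continue, so after finitely many steps one reaches $(\hat t,\hat x)$ with $\hat t<T$, $(u-w)(\hat t,\hat x)>\bar M-\eta$, and $u(\hat t,\hat x)>\mathcal{M}u(\hat t,\hat x)$. (The same argument shows no near-maximizer can sit at $t=T$.)

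With this reduction in hand I would run the usual doubling, localized near $(\hat t,\hat x)$: maximize
\begin{multline*}
	\Phi(t,x,s,y)\coloneqq u_\theta(t,x)-w(s,y)-\tfrac{1}{2\varepsilon}\bigl(|x-y|^2+|t-s|^2\bigr)\\
	-|x-\hat x|^4-|t-\hat t|^4-\kappa\bigl(|x|^2+|y|^2\bigr),
\end{multline*}
where the quartic localizer concentrates maximizers near $(\hat t,\hat x)$ and the $\kappa$-term restores coercivity on the unbounded domain (it vanishes as $\kappa\downarrow0$). At a maximizer $(\bar t,\bar x,\bar s,\bar y)$ the subsolution property gives $\min(\text{HJB part}+\theta,\,u_\theta(\bar t,\bar x)-\mathcal{M}u_\theta(\bar t,\bar x))\leq0$. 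Using the semicontinuity of $\mathcal{M}u$, $\mathcal{M}w$ together with convergence of the near-optimal impulses (by compactness of $Z$) and the near-maximality inherited from $(\hat t,\hat x)$, I would show the obstacle branch $u(\bar t,\bar x)\leq\mathcal{M}u(\bar t,\bar x)$ cannot persist as $\varepsilon\downarrow0$; hence for small $\varepsilon$ only the HJB branch is active. There one applies the Crandall--Ishii lemma: the resulting matrix and gradient terms are controlled by the Lipschitz bounds on $\mu$ and $\sigma$ (so that quantities of the form $\varepsilon^{-1}\|\sigma(\bar x,b)-\sigma(\bar y,b)\|^2$ and $|\langle\mu(\bar x,b)-\mu(\bar y,b),(\bar x-\bar y)/\varepsilon\rangle|$ are bounded by a constant times $|\bar x-\bar y|^2/\varepsilon\to0$), and the continuity of $f$ handles the source term; sending $\varepsilon,\kappa\downarrow0$ yields $\theta\leq0$, a contradiction. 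Letting $\eta,\theta\downarrow0$ forces $\bar M\leq0$, i.e.\ $u^*\leq w_*$.

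The main obstacle is precisely the interplay between the nonlocal obstacle $\mathcal{M}u$ and the doubling machinery: unlike an ordinary variational inequality, the obstacle here depends on the unknown, so the naive bound $u(\bar x)-w(\bar y)\leq\mathcal{M}u(\bar x)-\mathcal{M}w(\bar y)\to0$ stalls. The hypothesis $\sup K<0$ is exactly what breaks the stall --- it makes impulses strictly costly, so a bounded subsolution cannot be ``on the obstacle'' along an arbitrarily long chain of impulses --- and threading this through the localized doubling (ensuring the obstacle branch is genuinely excluded at the maximizers produced by the penalization, not merely at $(\hat t,\hat x)$) is the delicate bookkeeping the proof must carry out; the continuity of $Z$, $\Gamma$, $K$ from \ref{enu:vanilla_start} and \ref{enu:comparison_end} is what lets the intervention term pass to the limit.
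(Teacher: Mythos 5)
The paper does not prove this proposition internally; it delegates the result to Theorem~2.13 of the cited reference \texttt{azimzadeh2017zero} and the related Theorem~5.11 of \texttt{MR2568293}, so there is no in-paper proof to compare against. Evaluating your proposal on its own merits: the overall framework (doubling of variables, using the strictly negative impulse cost to escape the obstacle branch) is the standard one for HJBQVIs, and your impulse-chain step is correct as far as it goes --- if $u^*\leq\mathcal{M}u^*$ at a near-maximizer of $u^*-w_*$, following an optimal impulse raises $u^*$ by at least $c=-\sup K>0$, does not decrease $u^*-w_*$, and so terminates after finitely many steps at a point with $u^*>\mathcal{M}u^*$ and $t<T$.

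The genuine gap is exactly at the step you flag as ``delicate bookkeeping,'' and it is not a bookkeeping matter. The chain argument produces $(\hat t,\hat x)$ that is a \emph{near}-maximizer of $u^*-w_*$, not a local maximum; adding the quartic localizer $|x-\hat x|^4+|t-\hat t|^4$ therefore does not force the maximizers of $\Phi$ toward $(\hat t,\hat x)$, since $u^*-w_*$ may be even larger at other nearby points and the localized functional will settle there. Moreover, even if the doubled maximizers $(\bar t,\bar x)$ did converge to $(\hat t,\hat x)$, the condition $u^*>\mathcal{M}u^*$ is not an open condition: both $u^*$ and $\mathcal{M}u^*$ are USC (as you note), so $u^*-\mathcal{M}u^*$ has no sign semicontinuity, and $u^*(\bar t,\bar x)\leq\mathcal{M}u^*(\bar t,\bar x)$ can perfectly well hold along the whole sequence of doubled maximizers. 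Consequently the ``obstacle branch cannot persist as $\varepsilon\downarrow 0$'' claim has no justification, and without it the contradiction never materializes.

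What the argument needs is a \emph{uniform} exclusion of the obstacle branch, not a pointwise one. The standard repairs: (i) replace $u$ by the rescaled $u^\lambda\coloneqq\lambda u+(1-\lambda)h$ with $h\leq\inf u$, which gives $\mathcal{M}u^\lambda\leq\lambda\mathcal{M}u+(1-\lambda)h-(1-\lambda)c$ globally, so that at any point where $u\leq\mathcal{M}u$ one already has $u^\lambda-\mathcal{M}u^\lambda\geq(1-\lambda)c>0$; one then runs the doubling for $u^\lambda$ against $w$, excludes the obstacle branch at an \emph{arbitrary} doubled maximizer, and lets $\lambda\uparrow1$ at the end. (ii) Alternatively, prove comparison for the iterated variational inequalities $u^k$ sketched in the paper's introduction (where the obstacle $\mathcal{M}u^{k-1}$ is a fixed, known function and standard VI comparison applies) and pass to the $k\rightarrow\infty$ limit, using the strict cost to bound the number of iterates. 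Either way, the obstacle branch must be ruled out wherever the penalization lands, not at one precomputed point.
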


A proof of the above is found in \cite[Theorem 2.13]{azimzadeh2017zero}; see also \cite[Theorem 5.11]{MR2568293} for a related comparison principle for solutions of polynomial growth. %\footnote{Actually, \cite{azimzadeh2017zero} states the result for an HJBQVI that is identical to \cref{eq:hjbqvi} save that the term $\sup_{b \in B}$ is replaced by $\inf_{b \in B}$. However, examining the proof reveals that the choice of \cite{azimzadeh2017zero} trivially.}
However, in both these works, the notion of viscosity solution used is the following:

\begin{definition}
	\label{def:viscosity_solution_3}
	$u \in B_{\loc}(\overline{\Omega})$ is a subsolution of \cref{eq:pde} if for all $\varphi \in C^2(\overline{\Omega})$ and $x \in \overline{\Omega}$ such that $u^* - \varphi$ has a local maximum (resp. minimum) at $x$, we have
	\[
		F(x, u^*(x), D \varphi(x), D^2 \varphi(x), \mathcal{M} u^*(x)) \leq 0.
	\]
	Supersolutions are defined symmetrically.
\end{definition}

Unlike \cref{def:viscosity_solution}, the above does not use the envelopes $F_*$ and $F^*$.
Fortunately, for the HJBQVI, these solution concepts are equivalent:

\begin{proposition}
	For the HJBQVI, Definitions \ref{def:viscosity_solution} and \ref{def:viscosity_solution_3} are equivalent.
\end{proposition}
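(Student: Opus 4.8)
The plan is to localize the discrepancy between the two definitions at the terminal time $t = T$ and then remove it by a test-function perturbation. First I would record the trivial direction: since $F_* \leq F \leq F^*$ for any function $F$, any subsolution (resp.\ supersolution) in the sense of \cref{def:viscosity_solution_3} is automatically one in the sense of \cref{def:viscosity_solution}. For the converse, I would observe that the HJBQVI operator $F$ of \cref{eq:hjbqvi_operator} is continuous at every point whose time coordinate is $<T$: the integrand inside the supremum is jointly continuous and $B$ is compact (by \ref{enu:vanilla_middle}), so the supremum is continuous, and $\min(\cdot,\, r - \ell)$ preserves continuity. Hence $F_* = F = F^*$ on $\{t < T\}$ and the two notions coincide there; all of the work is at $t = T$.

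The next step is to compute the envelopes of $F$ at a terminal point $((T,x), r, (a,p), A, \ell)$. Splitting the defining sequences into those that remain on $\{t = T\}$ (where one uses continuity of $g$) and those approaching from $\{t < T\}$ (where one uses continuity of $\mu, \sigma, f$ and compactness of $B$), I would obtain
\[
	F_*((T,x), r, (a,p), A, \ell) = \min\!\left( r - g(x),\ r - \ell,\ -H(x,a,p,A) \right)
\]
and
\[
	F^*((T,x), r, (a,p), A, \ell) = \max\!\left( \min(r - g(x),\, r - \ell),\ \min(-H(x,a,p,A),\, r - \ell) \right),
\]
where $H(x,a,p,A) \coloneqq \sup_{b \in B}\bigl\{ a + \langle \mu(x,b), p\rangle + \tfrac12 \trace(\sigma\sigma^\intercal(x,b) A) + f(T,x,b)\bigr\}$ is finite by \ref{enu:vanilla_start}--\ref{enu:vanilla_middle}. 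The key structural fact is that $H$ is affine in its second argument $a$ with slope $1$.

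The heart of the argument is the perturbation at $t = T$. Suppose $u$ is a subsolution in the sense of \cref{def:viscosity_solution} and $u^* - \varphi$ has a local maximum at $(T,x)$. For $\lambda > 0$ put $\varphi_\lambda(t,y) \coloneqq \varphi(t,y) + \lambda(T - t)$; since $\varphi_\lambda \geq \varphi$ on $[0,T]\times\mathbb{R}^d$ with equality at $(T,x)$, the difference $u^* - \varphi_\lambda$ still attains a local maximum at $(T,x)$. This perturbation leaves the spatial derivatives of $\varphi$ unchanged and replaces $\varphi_t(T,x)$ by $\varphi_t(T,x) - \lambda$, so the subsolution inequality of \cref{def:viscosity_solution}, via the formula for $F_*$, reads
\[
	\min\!\left( u^*(T,x) - g(x),\ u^*(T,x) - \mathcal{M}u^*(T,x),\ \lambda - H(x, \varphi_t(T,x), D_x\varphi(T,x), D_x^2\varphi(T,x)) \right) \leq 0.
\]
Letting $\lambda \to \infty$ makes the third argument positive, hence $\min(u^*(T,x) - g(x),\, u^*(T,x) - \mathcal{M}u^*(T,x)) \leq 0$, which is precisely the requirement of \cref{def:viscosity_solution_3} at $(T,x)$. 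The supersolution case is entirely symmetric: given a local minimum of $u_* - \varphi$ at $(T,x)$, use $\varphi_\lambda(t,y) \coloneqq \varphi(t,y) - \lambda(T - t)$ to drive the $-H$ branch of $F^*$ to $-\infty$, so that the inequality $F^* \geq 0$ forces $\min(u_*(T,x) - g(x),\, u_*(T,x) - \mathcal{M}u_*(T,x)) \geq 0$.

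I do not expect a real obstacle; the only delicate point is the terminal-time bookkeeping, and the mechanism is clean: adding $\pm\lambda(T-t)$ to the test function shifts only the time-derivative slot of the interior branch of $F$ and hence switches that branch off in the limit, collapsing $F_*$ (resp.\ $F^*$) onto the genuinely terminal branch $\min(r - g(x),\, r - \ell)$ that \cref{def:viscosity_solution_3} uses. The routine verifications along the way --- continuity of $F$ on $\{t < T\}$, finiteness of $H$, the envelope formulas, and persistence of the local extremum under the perturbation --- all follow from \ref{enu:vanilla_start}--\ref{enu:vanilla_middle} and the compactness of $B$.
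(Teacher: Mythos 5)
Your proof is correct and takes essentially the same approach as the paper: both localize the issue at the terminal hyperplane $\{t=T\}$ and use the test-function perturbation $\varphi \mapsto \varphi \pm \lambda(T-t)$ to drive the interior branch of the envelope to $\pm\infty$, so that only the genuinely terminal branch $\min(r-g(x),\, r-\ell)$ survives. The only cosmetic difference is that you spell out the envelope formulas $F_*$, $F^*$ at $t=T$ explicitly, while the paper carries out the same computation inline.
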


We prove only the nontrivial direction.
The proof is similar to \cite[Remark 3.2]{MR2857450}.

\begin{proof}
	Let $u$ be a subsolution in the sense of \cref{def:viscosity_solution}.
	Without loss of generality, assume $u$ is upper semicontinuous.
	Let $\varphi \in C^{1,2}([0, T] \times \mathbb{R}^d)$ and $(T,x)$ be a maximum point of $u - \varphi$.
	Define $\psi(t, \cdot) \coloneqq \varphi(t, \cdot) + c(T - t)$ where $c$ is some positive constant, to be chosen later.
	Since $(T, x)$ is also a maximum point of $u - \psi$,
	\begin{multline*}
		0 \geq
		F_*((T, x), u(T, x), D \psi(T, x), D_x^2 \psi(T, x), \mathcal{M} u(T, x))
		=
		\\
		\min \left \{
			-\sup_{b \in B} \left \{
				(
					\varphi_t
					+ L^b \varphi
				)(T, x)
				+ f(T, x, b)
			\right \} + c,
			( u - \mathcal{M} u )(T, x),
			u(T, x) - g(x)
		\right \}.
	\end{multline*}
	By choosing $c$ large enough, the above reduces to a minimum of two terms so that
	\begin{multline*}
		0 \geq \min \left \{
			(u - \mathcal{M} u)(T, x),
			u(T, x) - g(x)
		\right \} \\
		= F((T, x), u(T, x), D \varphi(T, x), D_x^2 \varphi(T, x), \mathcal{M} u(T, x))
	\end{multline*}
	Therefore, $u$ is also a subsolution in the sense of \cref{def:viscosity_solution_3}.
	The case of supersolutions is handled similarly.
\end{proof}
\ifx\myarticle\undefined%%% SIAM %%%
\else%%% ARTICLE %%%
\section{Overstepping error}

We close with a discussion which, while not related to the theoretical convergence of the semi-Lagrangian scheme (\cref{thm:sls_convergence}), is of great importance for a practical implementation.

As discussed in \cref{rem:truncated_domain}, a practical implementation should take $Q = \const$
In this case, the point $x_j + \mu_j(b) \Delta t$ may not lie in the numerical domain $[-Q, Q]$ regardless of how small we take $\rho$.
Therefore, if $Q = \const$, the equality \cref{eq:sls_consistency_1} fails to hold, and the semi-Lagrangian approximation introduces $O(1)$ ``overstepping error''.
Semi-Lagrangian schemes are well-known to exhibit this phenomenon (see, e.g., \cite{MR3661103,picarelli2017boundary}).

One way to avoid overstepping error is to modify the spatial grid so that the distance between boundary nodes approaches zero sublinearly as $\rho \downarrow 0$.
For example, we may use a spatial grid satisfying (using Bachmann-Landau notation)
\begin{align}
	x_{j + 1} - x_j
	& = \Theta( \rho )
	& \text{for } -M < j < M-1
	\nonumber \\
	x_{j + 1} - x_j
	& = o( \sqrt{ \rho } ) \cap \omega( \rho )
	& \text{for } j = -M, M-1.
	\label{eq:altered_grid}
\end{align}
We sketch the idea below.

Let
\[
	\Vert \mu \Vert_{[-Q, Q]}
	\coloneqq \max_{(x, b) \in [-Q, Q] \times B} \mu(x, b)
	< \infty.
\]
If $x_j$ is a grid point satisfying $x_j < Q$, then
\begin{align*}
	x_j + \mu_j(b) \Delta t
    	& \leq x_{M-1} + \Vert \mu \Vert_{[-Q, Q]} \Delta t \\
    	& = x_M - \left( x_M - x_{M-1} \right) + \Vert \mu \Vert_{[-Q, Q]} \Delta t \\
    	& = Q - \left( x_M - x_{M-1} \right) + \const \rho.
\end{align*}
Since $x_M - x_{M-1} = \omega(\rho)$ by \cref{eq:altered_grid}, the above implies $x_j + \mu_j(b) \Delta t \leq Q$ for $\rho$ sufficiently small.
By a symmetric argument, we conclude that if $|x_j| < Q$, then $|x_j \pm \mu_j(b) \Delta t| \leq Q$ for $\rho$ sufficiently small (i.e., there is no overstepping).

The $o(\sqrt{\rho})$ requirement in \cref{eq:altered_grid} is used to ensure that the error from interpolation vanishes.
Indeed, if spatial grid points are not uniformly spaced, given a smooth function $\varphi$ of time and space, a computation similar to \cref{eq:sls_consistency_1} yields
\begin{multline*}
	\frac{
		\interp(
			\varphi^{n + 1},
			x_j + \mu_j(b) \Delta t
		) - \varphi(n \Delta t, x_j)
	}{\Delta t}
	\\
	= \varphi_t(n \Delta t, x_j) + \mu(x_j, b) \varphi_x(n \Delta t, x_j)
	+ O \left( \frac{\max_j \{x_{j+1} - x_j \}^2}{\Delta t} + \Delta t \right)
\end{multline*}
for $|x_j| < Q$.
Now, by the $o(\sqrt{\rho})$ requirement,
\[
	O \left( \frac{\max_j \{ x_{j + 1} - x_j \}^2}{\Delta t} + \Delta t \right)
	= o \left( \frac{(\sqrt{\rho})^2}{\rho} \right) + O(\rho)
	= o (1) \rightarrow 0.
\]

The above computations suggest that, unsurprisingly, the local order of convergence at the boundaries is adversely affected by our new choice of grid.
Therefore, it is worthwhile to point out that in the case of $\mu(-Q, \cdot) \geq 0$ and $\mu(Q, \cdot) \leq 0$, we can use the original uniform grid since no overstepping can occur.

%The above argument shows that if $Q = \const$ and
%\begin{gather*}
%	x + \Gamma(t, x, z) \in [-Q, Q] \\
%	\text{for all } (t, x) \in [0, T] \times \mathbb{R} \text{ and } z \in Z(t, x)
%\end{gather*}
%(i.e., impulses do not leave the domain $[-Q, Q]$), the semi-Lagrangian scheme is consistent with respect to the ``truncated'' HJBQVI
%\[
%	\begin{cases}
%		\displaystyle{
%			\min \left (
%				- \sup_{b \in B} \left \{
%					\frac{\partial u}{\partial t} + \mathfrak{L}_b u + f(\cdot, b)
%				\right \},
%				u - \mathcal{M} u
%			\right ) = 0,
%		}
%		& \text{on } [0, T) \times (-Q, Q)
%		\\
%		u(T, \cdot) = \max \left ( g, \mathcal{M} u(T, \cdot) \right ),
%		& \text{on } [-Q, Q]
%	\end{cases}
%\]
%(compare with \cref{eq:hjbqvi}) where
%\[
%	\mathfrak{L}_b u(t, x) \coloneqq
%	\begin{cases}
%		L_b u(t, x) & \text{if } -Q < x < Q  \\
%		0 & \text{if } x = \pm Q.
%	\end{cases}
%\]
\fi

\end{document}